\documentclass{article}
\usepackage{graphicx}
\usepackage{amsmath,amsthm}
\usepackage{amssymb}
\usepackage{color}
\usepackage{xcolor}
\usepackage[normalem]{ulem}

\newtheorem{theorem}{Theorem}[section]
\newtheorem{proposition}[theorem]{Proposition}

\newtheorem{exmp}[theorem]{Example}
\newtheorem{definition}[theorem]{Definition}
\newtheorem{remark}[theorem]{Remark}
\newtheorem{notation}[theorem]{Notation}

\newcommand{\sgn}{\mathrm{sgn}}
\newcommand{\ad}{\mathsf{ad}}
\newcommand{\ot}{\mathsf{ot}}

\begin{document}

\title{Decomposable shuffles}
\author{Jo\~{a}o Dias, Bruno Dinis, Carlos Correia Ramos}
\maketitle

\begin{abstract}
We develop a combinatorial and order-theoretic framework for shuffles, understood as ordered concatenations of indexed families of sequences that induce total orders on the natural numbers. Motivated by the classical Šarkovskiĭ order, we introduce elementary building blocks that encode finite and infinite order patterns and focus on decomposable shuffles constructed from finite ordinals together with $\omega$ and its dual $\omega^*$. We define representations that allow individual elements to be located within a shuffle and show how suitable structural conditions yield total orders on $\mathbb{N}$

\end{abstract}

\medskip
\noindent\textbf{Keywords:} shuffles; total orders; Šarkovskiĭ order; ordinals; composition

\medskip
\noindent\textbf{AMS 2020 Mathematics Subject Classification:}
Primary 06A05; Secondary 06F15, 37E05, 20B99.

\section{Introduction}

This paper introduces and develops a systematic theory of shuffles -- seen informally as ordered concatenations of indexed families of sequences -- to clarify how they generate total orders on $\mathbb{N}$ and to investigate algebraic operations naturally associated with them. In particular, we introduce a notion of composition between shuffles that generalizes the classical notion of composition of permutations.

The main motivation for this work comes from a paper by \v{S}arkovski\u{i} \cite{Shar}, where the author shows that the following order arises from the problem of the
coexistence of cycles with different periods for discrete-time dynamical systems given by maps of the interval:

\begin{equation}  \label{E:Shar_order}
\begin{split}
3 \prec 5 \prec 7 \prec 9 \prec \dots \prec (2n+1)^n\cdot 2^0 \prec \dots \\
3\cdot 2 \prec 5\cdot 2 \prec 7\cdot 2 \prec 9\cdot 2 \prec \dots \prec
(2n+1)^n\cdot 2^1 \prec \dots \\
3\cdot 2^2 \prec 5\cdot 2^2 \prec 7\cdot 2^2 \prec 9\cdot 2^2 \prec \dots
\prec (2n+1)^n\cdot 2^2 \prec \dots \\
3\cdot 2^3 \prec 5\cdot 2^3 \prec 7\cdot 2^3 \prec 9\cdot 2^3 \prec \dots
\prec (2n+1)^n\cdot 2^3 \prec \dots \\
\dots \prec 2^n \prec 2^4 \prec 2^3 \prec 2^2 \prec 2 \prec 1
\end{split}%
\end{equation}

The ordering defined in \eqref{E:Shar_order} consists of the odd numbers $\geq 3$ in increasing order, followed by $2$ times the odd numbers, also
in increasing order, $4$ times the odd numbers again in increasing order,
and so on, until all powers of $2$ times the odd numbers are done and, finally, the powers of two in decreasing order.

Beyond its original dynamical interpretation, the \v{S}arkovski\u{i} order exhibits a rich internal structure that suggests a more general combinatorial and order-theoretic framework. One natural way to encode such orders is through shuffles.

Since the class of all shuffles seems to be too broad to admit a complete classification, we focus on more tractable subclasses. In particular, we identify a class of \emph{decomposable shuffles}, which can be expressed as countable sums of finite ordinals together with the ordinal of the natural numbers $\omega$ and  its dual $\omega^*$. While the class of decomposable shuffles covers many natural examples, it does not encompass all possible order types, such as that of the rational numbers.

The paper is organized as follows:

In Section~\ref{S:Snakes}, we introduce the basic terminology and notation for shuffles, together with their fundamental building blocks that we call snakes, ladders, and benches, according to their order type. These objects allow us to describe both finite and infinite patterns within a unified language.

Section~\ref{S:Representing} explores how shuffles can be represented by and related to ordinals and their duals, highlighting the interplay between combinatorial constructions and classical order types. 

In Section~\ref{S:Uniform}, uniform sets are introduced to encode ladders, snakes, and benches in a hierarchical, index-based form, allowing finite and infinite segments to be treated within a common framework. 

In Section~\ref{S:Mixed_degrees}, we introduce mixed sets, which allow different types of building blocks to coexist within the same shuffle, together with the notion of address, which enables the individual identification of elements within a shuffle.

In Section~\ref{S:Orders_from_Sinfty}, we establish a relationship between shuffles, their representation as sequences of sequences, and the total orders they induce on the natural numbers. Under additional structural restrictions, we show that certain shuffles naturally define total orders on $\mathbb{N}$.

In Section~\ref{S:Diagram}, we introduce a diagrammatic representation that facilitates visualization and classification of shuffles in terms of equivalent order types.

In Section~\ref{S:Involution}, we define an involution on shuffles that induces a duality on the associated order types, reversing the corresponding total order on $\mathbb{N}$.

Finally, in Section~\ref{S:Composition}, we impose a further restriction -- roughly speaking, allowing only infinite sequences (segments) --, in order to define a composition operation on shuffles. This composition generalizes the composition of permutations in finite and infinite sets. Within this restricted  framework, we show that the class of shuffles supported on a single segment admits a group structure under composition.

\section{Snakes, ladders and benches}\label{S:Snakes}

Infinite permutation groups have been studied in the literature (see e.g.\ Peter Cameron \cite{Cameron76,Cameron} and Peter Neumann \cite{neumann2023infinite}). Inspired by \eqref{E:Shar_order} we will adopt a different and more general approach, considering all possible reorderings of the set of natural numbers. We will call a \emph{shuffle} to any such reordering.

\begin{definition}
A \emph{shuffle} is a pair $(\mathcal{P},\prec )$ such that:

\begin{itemize}
\item $\mathcal{P}$ is a partition of $\mathbb{N}$.

\item $\prec$ is a strict total order on $\mathbb{N}$ such that for every $x,y\in
X\in \mathcal{P}$ and $z\in X^{\prime }\in \mathcal{P}$ if we have $x\prec z
\prec y$ then $X= X^{\prime }$.
\end{itemize}
\end{definition}

Note that in a shuffle $(\mathcal{P},\prec )$ the order relation $\prec$
also defines an order on the elements of the partition $\mathcal{P}$.

Every totally ordered set is associated with an \emph{order type}. Two sets $%
A$ and $B$ are said to have the same order type if and only if there is an
order preserving bijective mapping between the two. 
Otherwise said, if they
are \emph{order isomorphic}. With that in mind, we propose to
classify shuffles according to the order type of the elements of $\mathcal{P}
$. Recall that the order type of the usual ordering of the natural
numbers is the ordinal $\omega$ and the order type of the reverse ordering of the natural numbers is its dual $\omega^*$. In fact, since a well-ordered set is order-equivalent to exactly one ordinal number, we can attribute a (countable) ordinal number to any well-ordered shuffle, as its representative.

\begin{definition}
Let $(\mathcal{P},\prec )$ be a shuffle. An element of $\mathcal{P}$ is called  a \emph{ladder} if it has order type $\omega $, a \emph{snake} if
it has order type $\omega ^{\ast }$, and a \emph{segment} if it is either a ladder or a snake. Elements of $\mathcal{P}$ of order type some finite ordinal $n$, are called \emph{benches}. 
\end{definition}

The intuition is that ladders (eventually) go ``up" towards infinity, snakes
(eventually) go ``down" from infinity, and benches are parts that remain finite, and so  a part where the sequence temporarily ``rests". Note that ladders, snakes and benches do not need to have all the natural numbers.

\begin{exmp}
Let us give some examples of ladders, snakes and benches:

\begin{itemize}
\item[$(a)$] The usual ordering of the natural numbers $(0,1,2,3,\dots)$ and that of the odd numbers $(1,3,5,7,\dots)$ are ladders. The same holds for any finite permutation of a ladder.

\item[$(b)$] The reverse order of the natural numbers $(\dots,4,3,2,1,0)$ and that of the odd numbers greater than $5$, $(\dots, 11,9,7)$, are snakes. Any finite permutation of a snake is also a snake.

\item[$(c)$] Consider the shuffle $(2,4,6,8,\dots)(0,1)(\dots, 7,5,3)$ -- consisting of the non-zero even natural numbers in increasing order, followed by $0,1$, and then followed by the odd natural numbers greater than $1$ in decreasing order. This shuffle has three distinct parts: the first part is a ladder, the second is a bench of length $2$, and the third is a snake.
\end{itemize}
\end{exmp}

The following two well-known results show that every ladder is isomorphic to a unique ordinal and that snakes and ladders are non-isomorphic elements of
a shuffle.

\begin{theorem}[{\protect\cite[Thm 2.12]{Jech}}]
Every well-ordered set is isomorphic to a unique ordinal.
\end{theorem}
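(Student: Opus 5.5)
We prove the statement in the standard von Neumann setting, where an ordinal is a transitive set well-ordered by $\in$ and $\alpha<\beta$ means $\alpha\in\beta$. There are two parts, \emph{uniqueness} and \emph{existence}. Both rest on one lemma about order-preserving maps, which we prove first.

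\textbf{Key lemma.} If $(W,<)$ is well-ordered and $f\colon W\to W$ is increasing, then $f(x)\ge x$ for all $x$. Suppose not. Then the set $\{x: f(x)<x\}$ is nonempty, so it has a least element $x_0$. Put $y=f(x_0)<x_0$. Since $f$ is increasing, $f(y)<f(x_0)=y$, so $y$ lies in the set and is smaller than $x_0$, a contradiction.

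Two consequences follow. First, a well-ordered set is not isomorphic to any proper initial segment $W_x=\{y:y<x\}$ of itself: an isomorphism $f\colon W\to W_x$ would give $f(x)<x$. Second, the only automorphism of $W$ is the identity, and so an isomorphism between two well-ordered sets, when it exists, is unique. For ordinals we also use that every initial segment of $\alpha$ is $\alpha_\beta=\beta$ for some $\beta<\alpha$. This holds because ordinals are transitive and $\in$ is the order.

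\textbf{Uniqueness.} Suppose $W\cong\alpha$ and $W\cong\beta$ with $\alpha\neq\beta$. Then $\alpha\cong\beta$. Ordinals are comparable, so we may assume $\alpha<\beta$, that is, $\alpha\in\beta$. Then $\alpha$ is a proper initial segment of $\beta$ isomorphic to $\beta$, which contradicts the key lemma.

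\textbf{Existence.} Let $F$ be the set of $x\in W$ such that $W_x$ is isomorphic to some ordinal. By uniqueness that ordinal is determined, so we may call it $\alpha_x$. Define $f(x)=\alpha_x$ on $F$. By the Axiom Schema of Replacement, the range $A=f[F]$ is a set. We then verify four facts in turn.
\begin{enumerate}
\item $F$ is an initial segment of $W$. If $y<x\in F$ and $g\colon W_x\to\alpha_x$ is the isomorphism, then $g$ restricted to $W_y$ is an isomorphism onto the ordinal $g(y)$.
\item $A$ is transitive. By the same restriction argument, every element of $\alpha_x$ has the form $\alpha_y$ for some $y<x$.
\item $A$ is therefore a transitive set of ordinals, hence an ordinal $\alpha$.
\item $f$ is an isomorphism $F\to\alpha$. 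It is order-preserving by uniqueness together with the restriction argument, and it is onto by construction.
\end{enumerate}
Finally, if $F\neq W$, then $F=W_{x}$ where $x$ is the least element of $W\setminus F$. In that case $W_x\cong\alpha$, so $x\in F$, a contradiction. Hence $F=W$ and $W\cong\alpha$.

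\textbf{Main obstacle.} The delicate step is the existence argument. It needs Replacement to know that $A$ is a set, and it needs the check that the restrictions of the isomorphisms cohere, which is where the uniqueness of isomorphisms from the key lemma is used. For the countable well-orders relevant to shuffles, one could instead define the map directly by transfinite recursion, $f(x)=\{f(y):y<x\}$, and verify by induction that its image is an ordinal. That route relies on the same ingredients.
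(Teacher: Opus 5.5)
Your proof is correct and is essentially the standard argument of the source the paper cites (Jech, Lemma 2.6 through Theorem 2.12): the increasing-map lemma gives uniqueness, and Replacement applied to $x\mapsto\alpha_x$, together with the least-element argument, gives existence. The paper itself gives no proof beyond that citation; one small remark is that your coherence step actually uses only the uniqueness of the ordinal, which is what forces $\alpha_y=g(y)$, and not the uniqueness of isomorphisms.
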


\begin{theorem}[{\protect\cite[Thm 4.3.2]{Cie}}]
A linearly ordered set is well-ordered if and only if it does not contain a subset of order type $\omega^*$.
\end{theorem}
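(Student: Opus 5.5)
We prove the statement of \cite[Thm 4.3.2]{Cie} directly: a linearly ordered set $(L,<)$ is well-ordered if and only if it contains no subset of order type $\omega^*$. Each direction goes by contraposition, and only one of them is non-trivial.

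\emph{Well-ordered implies no copy of $\omega^*$.} Suppose $L$ contains a subset $A$ of order type $\omega^*$. Then there is an order isomorphism $f\colon \omega^*\to A$; equivalently, there are elements $a_0>a_1>a_2>\cdots$ with $A=\{a_n : n\in\mathbb{N}\}$. The set $A$ is nonempty. It has no least element, since for every $a_n$ the element $a_{n+1}$ lies strictly below it. Hence $L$ is not well-ordered. This direction is immediate from the definitions.

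\emph{No copy of $\omega^*$ implies well-ordered.} Suppose $L$ is not well-ordered, and fix a nonempty subset $B\subseteq L$ with no least element. We build a strictly decreasing sequence by recursion. Choose $b_0\in B$. Given $b_n\in B$, note that $b_n$ is not least in $B$, so the set $\{b\in B : b<b_n\}$ is nonempty, and we choose $b_{n+1}$ in it. To make these choices simultaneously we use the axiom of dependent choice, or a choice function on the nonempty subsets of $B$. The result is $b_0>b_1>b_2>\cdots$, all distinct. The map $n\mapsto b_n$, read with $\mathbb{N}$ in reverse order, is then an order isomorphism from $\omega^*$ onto $\{b_n : n\in\mathbb{N}\}$. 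So $L$ contains a subset of order type $\omega^*$.

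The only delicate step is the recursive construction in the second direction, where some choice principle is needed. Dependent choice is exactly what is used, and it is harmless here. Indeed, in the paper's setting every relevant order lives on a subset of $\mathbb{N}$, and the choice can be made explicit: take $b_{n+1}$ to be the element of $\{b\in B: b<b_n\}$ that is least in the usual order of $\mathbb{N}$. This makes the argument fully constructive for shuffles.
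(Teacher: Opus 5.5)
Your proof is correct and is the standard argument: one direction is immediate, since a copy of $\omega^*$ is a nonempty subset with no least element, and for the other you use a dependent-choice recursion to extract a strictly decreasing sequence from a subset that has no least element. The paper gives no proof of its own here and only cites \cite[Thm 4.3.2]{Cie}, so there is nothing to compare beyond noting that your argument is the usual one. Your closing remark, that picking the $\mathbb{N}$-least candidate removes the need for choice, is right for orders on subsets of $\mathbb{N}$, though ``choice-free'' describes it better than ``fully constructive''.
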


\begin{definition}
A shuffle $(\mathcal{P},\prec )$ is said to be \emph{decomposable} if there exists a countable ordinal $\lambda$
and natural numbers $n_i,m_i$, with $i \in \{0, \dots,\lambda \}$ such that $(\mathcal{P},\prec )$ has order type 
\begin{equation*}
\sum_{i=0}^{\lambda}(n_i + \omega_i+m_i),
\end{equation*}
where $\omega_i\in \{\omega, \omega^*\}$. 
\end{definition}

Decomposable shuffles allow for a more refined classification in terms of ladders, snakes and benches. Note, in particular, that $\omega^2$ is an example of such an order type. Recall from the arithmetic of order types that if $n$ is a natural number, then $n+\omega=\omega$ and 
$\omega^* + n = \omega^*$.

\begin{definition}
Let $\alpha$ be a finite or countable ordinal and $n$ be a natural number. The parts of a decomposable shuffle which have order type $\omega \alpha$ are called \emph{$\alpha$-ladders%
}, the parts which have order type $\alpha\omega^* $ are called \emph{$\alpha$-snakes}, and the parts that have finite order type $n$, 
are called \emph{$n$-benches}.
\end{definition}

When we want to make the parts of a shuffle explicit we use a tuple notation. For example, a ($2$-snake, $1$-ladder) is a related with the order type $\omega^*+\omega^*+\omega= 2\omega^*+\omega$ (cf. Example~\ref{Ex:pairs} below).

\begin{definition}
Two shuffles are said to be \emph{order equivalent} if their associated ordered sets 
 are order isomorphic. 
\end{definition}

It is straightforward to check that order equivalence is an equivalence relation.

As illustrated by the following example decomposable shuffles may therefore be classified in terms of snakes, ladders and benches. 

\begin{exmp}\label{Ex:pairs}
\begin{itemize}
\item[$(a)$] The usual ordering of the primes $(2,3,5,\dots)$ is a $1$%
-ladder and so is any finite permutation of the natural numbers with the usual ordering such as $(2,1,0,3,4,5,\dots)$.

\item[$(b)$] The shuffle $(1,3,5,7,\dots)(0,2,4,6, \dots)$ that contains all odd numbers and then all the even
numbers, both in increasing order, is a $2$-ladder.

\item[$(c)$] The even natural numbers with the reverse order $(\dots,6,4,2,0)$ are a $1$%
-snake and so is any finite permutation of this order.

\item[$(d)$] The shuffle $(0,2,4,6,\dots)(\dots, 5,3,1)$ consisting of the even natural numbers in the usual order, followed by the odd
natural numbers in the reverse order  is a $(1$-ladder,%
$1$-snake$)$.

\item[$(e)$] The shuffle $(6,7,8,9,\dots)(5,4,3,2,1,0)$, consisting of the natural numbers greater or equal than $6$ in the usual order, followed by the sequence $5,4,3$, $2,1,0$ is a $(1$-ladder, $6$-bench$)$.

\item[$(f)$] The shuffle from equation \eqref{E:Shar_order} is an $(\omega$-ladder, $1$-snake$)$.
\end{itemize}
\end{exmp}

Of course, the classification illustrated above does not distinguish finite permutations, nor even some infinite ones. For example, the shuffle that swaps each even number with the following odd number $1,0,3,2,5,4,\dots $ is still a $1$-ladder. 

In order to overcome these limitations, we shall look for alternative classifications and representations of shuffles in the following section. 

Another related problem is how to identify shuffles that are different, but not in an essential way. To deal with this problem we introduce a notion of equivalence for shuffles.

\begin{definition}\label{canonical}
Two shuffles $(\mathcal{P},\prec )$,  $(\widetilde{\mathcal{P}}, \widetilde{\prec} )$, are said to be \emph{equivalent} if $\widetilde{\prec}=\prec$.
\end{definition}

 For decomposable shuffles which are equivalent in the sense of Definition~\ref{canonical}, there exists a family of ``canonical" choices of partitions associated with a given order $\prec$. These are obtained when parts are ladders, snakes, or benches with a minimal number of benches. We call this a \textit{canonical partition} for a decomposable shuffle. 

\begin{exmp}
Let $\mathcal{P=\{}X_{1},X_{2},X_{3}\mathcal{\}}$ with $X_{1}=\left(
1,3,5\right)$, $X_{2}=(7,9,11,...)$, $X_{3}=(...,8,6,4,2,0)$, and let $\mathcal{P%
}^{\prime }\mathcal{=\{}Y_{1},Y_{2}\mathcal{\}}$ with $%
Y_{1}=(1,3,5,7,9,11,...)$, $Y_{2}=(...,8,6,4,2,0)$. Then the shuffles $(\mathcal{P}%
,\prec )$ and $(\mathcal{P}^{\prime },\prec )$ are equivalent, and $\mathcal{%
P}^{\prime }$ is a canonical partition, while $\mathcal{P}$ is not. 
\end{exmp}

\begin{theorem}\label{canonical-prop}
Let $(\mathcal{P},\prec )$ be a decomposable shuffle.  If there is no pair of consecutive segments in which a snake precedes a ladder, then the canonical partition is unique. If there exists at least one consecutive snake-ladder pair, then there are infinitely many distinct canonical partitions.
\end{theorem}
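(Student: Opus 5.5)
The plan is to work entirely with the total order $(\mathbb{N},\prec)$ and to read off every canonical partition from the order itself. Fix $\prec$ of decomposable type $\sum_{i\le\lambda}(n_i+\omega_i+m_i)$. For a canonical partition $\mathcal{P}$, every part is a convex subset of $(\mathbb{N},\prec)$ of type $\omega$, $\omega^*$ or finite, and the number of benches is minimal. First I will show that in a canonical partition there are \emph{no} benches at all. A bench cannot be absorbed only if it lies between a ladder on its left and a snake on its right: any other neighbour configuration allows it to merge, since $\omega^*+n=\omega^*$, $n+\omega=\omega$, and two adjacent benches combine into one. A bench sitting between a ladder $L$ and a snake $S$ can still be removed. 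Write the bench as $b_1\prec\dots\prec b_k$ and split it as $(b_1,\dots,b_j)$ appended to $L$ and $(b_{j+1},\dots,b_k)$ prepended to $S$. This gives $\omega+j=\omega$? No — $\omega+j\neq\omega$, so this split does not work. I therefore have to be more careful, and I will define "minimal number of benches" as the minimum over all convex partitions into segments and benches. With that definition, the benches that survive are exactly the maximal finite convex blocks that are squeezed between a ladder-end and a snake-start, and these are forced.

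The next step is to identify the forced structure. I will call a point $x$ a \emph{left limit} if it has no immediate predecessor but does have predecessors, and a \emph{right limit} dually. A part of type $\omega$ must begin at some point and have no maximum. A part of type $\omega^*$ must have no minimum and end at some point. Consequently, in any partition into segments and benches, every $\prec$-"cut" that has no maximum on the left (the supremum of an $\omega$-tail) must be a boundary between parts, and the same holds for every cut with no minimum on the right. These forced cuts are independent of $\mathcal{P}$. Between consecutive forced cuts lies a convex piece $C$, whose type by decomposability is $n$, $\omega$, $\omega^*$, $\omega^*+\omega$ (snake followed by ladder), or a finite chunk glued appropriately. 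I will show that a piece of type $\omega$ or $\omega^*$ has exactly one minimal-bench partition, namely the whole piece, since any further cut inside it would create a bench or an extra forced-free boundary. A finite piece sitting between a ladder and a snake can only be a single bench, again uniquely. This proves the uniqueness half whenever no piece of type $\omega^*+\omega$ occurs.

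Finally, a piece $C$ of type $\omega^*+\omega$ is a $\mathbb{Z}$-chain $\dots\prec c_{-1}\prec c_0\prec c_1\prec\dots$. For each $k\in\mathbb{Z}$, the split $\{c_i:i<k\}\cup\{c_i:i\ge k\}$ gives a snake followed by a ladder with no bench. These splits are pairwise distinct, so infinitely many canonical partitions exist, and choices in different pieces combine freely. Conversely, a snake--ladder consecutive pair in a canonical partition yields exactly such a piece, since the junction between them is not a forced cut. The main obstacle is making "minimal number of benches" precise, because the sum may have infinitely many benches. I would handle this piecewise, showing that minimality is attained independently on each piece between forced cuts, so that the global count need not be finite.
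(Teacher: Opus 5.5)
Your argument is correct once you delete the retracted claim in your first paragraph. Canonical partitions can contain benches; the unavoidable ones are exactly the finite pieces you identify later. You also follow a genuinely different route from the paper.

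The paper works with local moves. It lists the rearrangements that preserve $\prec$ and the part types (splitting or merging benches, absorbing a bench into the end of a snake or the start of a ladder, shifting the junction of a snake--ladder pair), asserts that no others exist, and then minimizes benches. You instead extract an invariant of $\prec$ alone, the forced cuts. You show that every partition into segments and benches respects them, and this reduces the problem to an independent analysis of pieces of type $n$, $\omega$, $\omega^*$, $\omega^*+\omega$.

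The paper's route is shorter and matches the diagram calculus. Yours actually proves the exhaustiveness that the paper only asserts, and it makes ``minimal number of benches'' meaningful when infinitely many benches are forced. That piecewise reading is genuinely needed. Take $\sum_{k<\omega}(\omega+1+\omega^*\cdot\omega)+\omega^*$, with $\omega^*\cdot\omega=\omega^*+\omega^*+\cdots$. There is no consecutive snake--ladder pair, yet splitting one ladder into a bench and a ladder keeps the bench count at $\aleph_0$, so counting benches as a cardinal gives no uniqueness.

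Your framework also shows that the hypothesis must be read for canonical partitions. The partition snake, bench, ladder of $\omega^*+1+\omega$ has no consecutive snake--ladder pair, although the whole order is a single piece of type $\omega^*+\omega$.

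Two points to tighten.

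First, a forced cut cannot cross a part because in a part of type $n$, $\omega$ or $\omega^*$ every proper nonempty initial segment has a maximum and every proper nonempty final segment has a minimum. Cite that property rather than ladders having a first element.

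Second, forced cuts can accumulate: in $\omega^2+\omega^*$ the cut before the final snake has no immediately preceding forced cut. So define the pieces as the classes of the relation ``only finitely many elements lie $\prec$-between $x$ and $y$''. These classes are exactly the maximal sets not separated by a forced cut. Indeed, if $x\prec y$ are not separated, every $z$ with $x\preceq z\prec y$ has an immediate successor. If the successor iterates of $x$ never reached $y$, their downward closure would be an initial segment without maximum separating $x$ from $y$. Hence every piece is order isomorphic to a convex subset of $\mathbb{Z}$, i.e.\ of type $n$, $\omega$, $\omega^*$ or $\omega^*+\omega$, and decomposability is not even needed for your classification.
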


\begin{proof} By definition, the components of a canonical partition are segments (ladders, snakes), or benches. Any partition compatible with the order $\prec$ can be transformed by either refining parts (subdividing them) or by coarse graining (merging consecutive parts). For partitions consisting only of segments and benches, the following transformations preserve both the order $\prec$ and the nature of the parts:
\begin{enumerate}
\item[$(i)$] A bench (with more than one element) may be subdivided into smaller benches, and the concatenation of two benches is a bench.
\item[$(ii)$] A snake may be partitioned into a snake and a bench, and the concatenation of a snake and a bench is again a snake.
\item[$(iii)$] A ladder can be partitioned into a bench and a ladder, and the concatenation of a bench and a ladder is again a ladder.
\item[$(iv)$] A consecutive snake-ladder pair may be transformed into infinitely many distinct snake-ladder pairs by transferring the maximal element of the snake into the minimal element of the ladder (or vice versa).
\end{enumerate}
No further transformations preserve both the order and the classification as ladders, snakes, and benches. Indeed, a snake cannot be divided into two snakes, since \(\omega^{*}+\omega^{*}\neq\omega^{*}\), and an analogous statement holds for ladders. If no consecutive snake–ladder pair occurs, then transformations of type $(iv)$ are impossible. In this case, minimizing the number of benches uniquely determines the partition, yielding a unique canonical partition. If, on the other hand, at least one snake–ladder pair occurs, then transformations of type $(iv)$ generate infinitely many distinct partitions, all of which have the same minimal number of benches and are therefore canonical. 
\end{proof}

\begin{exmp}
Theorem~\ref{canonical-prop} shows that decomposable shuffles, whose order type contains a term of the form $\omega^*+\omega$ (corresponding to a consecutive snake-ladder pair), admit more than one canonical partition. For example, let $\mathcal{P=\{}Z_{1},Z_{2},Z_{3}\mathcal{\}}$ with $Z_{1}=\left(
1,3,5\right) $, $Z_{2}=(...,8,6,4,2,0)$, $Z_{3}=(7,9,11,...)$, and let $%
\mathcal{P}^{\prime }\mathcal{=\{}Z_{1}^{\prime },Z_{2}^{\prime },Z_{3}^{\prime }%
\mathcal{\}}$  with $Z_{1}^{\prime }=\left( 1,3,5\right) $, $Z_{2}^{\prime
}=(...,2,0,7),$ $Z_{3}^{\prime }=(9,11,...)$. Then the shuffles $(%
\mathcal{P},\prec )$ and $(\mathcal{P}^{\prime },\prec )$ are equivalent,
and both  $\mathcal{P}$ and $\mathcal{P}^{\prime }$ are canonical partitions.
\end{exmp}

\section{Representing shuffles}\label{S:Representing}

\subsection{Uniform sets}\label{S:Uniform}

In this section, we introduce uniform sets as structured representations of the basic building blocks of shuffles. Uniform sets encode ladders, snakes, and benches in a hierarchical and index-based form, allowing infinite and finite segments to be treated within a unified framework.

Each uniform element is assigned a degree and a sign, reflecting its depth and orientation, and admits a canonical decomposition into segments. Uniform sets thus provide a precise and expressive language for describing the local structure of shuffles, serving as the fundamental components from which more general, mixed representations will be constructed in the following section.

We shall make use of the following notation. 
\begin{notation}
We write  $\left[ +\infty \right] :=\mathbb{N}$ and $\left[ -\infty \right] :=-\mathbb{N}$. For $m\in \mathbb{N}$,  we set

\begin{eqnarray*}
&\left[ m\right]& :=\left\{0, 1,2,...,m-1\right\},\\
&\left[ -m\right]& :=\left\{ -m+1,-m+2,...,-1,0\right\}.
\end{eqnarray*}

For $n,m\in \mathbb{N}$, with $n\leq m$, we write

\begin{eqnarray*}
\left[ n,m\right] :=\left\{ n,n+1,\cdots,m\right\}. 
\end{eqnarray*}
\end{notation}

When dealing with sequences representing segments, we adopt the following conventions. 
\begin{notation}
Ladders and snakes are represented, respectively, by infinite sequences indexed appropriately

\begin{eqnarray*}
\left( s_{k}\right) _{k\in \left[ +\infty \right] } &=&\left(
s_{0},s_{1},s_{2},...,s_{k},...\right)  \\
\left( s_{k}\right) _{k\in \left[ -\infty \right] } &=&\left(
...,s_{-k},...,s_{-2},s_{-1},s_{0}\right) .
\end{eqnarray*}%

Here, $\left( s_{k}\right) _{k\in \left[ +\infty \right] }$ represents a ladder and $\left( s_{k}\right) _{k\in \left[ -\infty \right] }$ represents a snake. Benches are represented by finite sequences of the form 
\begin{equation*}
\left( s_{k}\right) _{k\in \left[ n,m\right] }=\left(
s_{n},s_{n+1},s_{n+2},...,s_{m}\right).
\end{equation*}
\end{notation}

Since we frequently work with multiple indices, we introduce the following simplifying notation.

\begin{notation}
We consider sequences of sequences of the form
\begin{equation*}
s\left( i_{0},i_{1},...,i_{k}\right) _{i_{0}\in \left[ m_{0}\right]
,...,i_{k}\in \left[ m_{k}\right] },
\end{equation*}

where each index $i_{j}$ belongs to
the domain $\left[ m_{j}\right]$, for $j=0,1,...,k $. 

Let $I=\left(
i_{0},i_{1},...,i_{k}\right) \in \Pi _{k}$, where the index set $\Pi _{k}$ denotes the Cartesian product 

\begin{equation*}
\Pi _{k}:=\left[ m_{0}\right] \times \cdots \times \left[ m_{k} \right] . 
\end{equation*}

With this notation, we may write
\begin{equation*}
s\left( I\right) _{I\in \Pi _{k}}=s\left(
i_{0},i_{1},...,i_{k}\right) _{i_{0}\in \left[ m_{0}\right] ,...,i_{k}\in %
\left[ m_{k}\right] }
\end{equation*}%
When it is convenient to distinguish the last index, we write it explicitly as
\begin{equation*}
s\left( I,i_{k}\right) _{I\in \Pi _{k-1},i_{k}\in \left[ m_{k}%
\right]. }
\end{equation*}
\end{notation}

Consider the following example of a shuffle whose elements are identified by indices.

\begin{exmp}\label{Ex:3ladder}
    Consider the 3-ladder consisting of the following three infinite sequences:
$$S=\left( \left( 1,4,7,10,...\right) ,\left( 2,5,8,11,...\right) ,\left(
3,6,9,12,...\right) \right).$$  

In this case, $S=\left(
s\left( i_{0},i_{1}\right) _{i_{0}\in \left[ 2\right] ,i_{1}\in \left[
+\infty \right] }\right) $ where
\begin{equation*}
s\left( i_{0},i_{1}\right) =3 i_{1} +i_{0}+1,i_{0}\in \left[ 2%
\right] ,i_{1}\in \left[ +\infty \right].
\end{equation*}%

Individual elements of the shuffle are identified by their indices, e.g.,

\begin{equation*}
s\left( 0,2\right) =7, s\left( 1,3\right) =11, s\left(
2,4\right) =15.
\end{equation*}

\end{exmp}

\begin{definition} \label{def-unif}
Sets of sequences of the same type are called \emph{uniform sets} and are
defined as follows. Let $t\in \mathbb{N}$ and $\varepsilon \in \left\{ -,+\right\}$. 

\begin{eqnarray*}
\mathcal{U}_{2t}^{\varepsilon  }&:=&\left\{ s\left( I\right) _{I\in \Pi
_{t}}:b\in \mathbb{N},m_{0}\in \left[ b\right] ,m_{1}=\cdots m_{t}=\varepsilon 
\infty \right\},\\
\mathcal{U}_{2t+1}^{\varepsilon  }&:=&\left\{ s\left( I\right) _{I\in \Pi
_{t}}:m_{0}=\cdots =m_{t}=\varepsilon  \infty \right\}.
\end{eqnarray*}

We further set
\begin{equation*}
\mathcal{U}_{d}:=\mathcal{U}_{d}^{-}\cup \mathcal{U}_{d}^{+},d\in \mathbb{N}  
\end{equation*}%
and 
\begin{equation*}
\mathcal{U}:=\bigcup_{d\geq 0}\mathcal{U}_{d}  
\end{equation*}%

\label{uniform-sets}
\end{definition}

Note that the parity $2t$ versus $2t+1$  distinguishes whether the outermost index ranges over a finite or infinite domain.

\begin{exmp}
The first uniform sets admit the following concrete descriptions. The set 
\begin{equation*}
\mathcal{U}_{0}=\left\{ s\left( i_{0}\right) _{i_{0}\in \left[ b\right]
},b\in \mathbb{N}\right\} ,
\end{equation*}%
is the set of finite sequences of natural numbers. For example, $%
\left( 2,3,4,5\right)  \in \mathcal{U}_{0}$. The set 
\begin{equation*}
\mathcal{U}_{1}=\left\{ s\left( i_{0}\right) _{i_{0}\in \left[ m_{0}\right]
}:m_{0}\in \left\{ \pm \infty \right\} \right\}
\end{equation*}%
is the set of sequences of natural numbers; For example, $\left(
2,4,6,8,...,2n+2,...\right) \in \mathcal{U}_{1}^{+}\subset \mathcal{U}%
_{1}$, and $\left( ...2n+2,...,8,6,4,2\right) \in \mathcal{U}%
_{1}^{-}\subset \mathcal{U}_{1}$. The set 
\begin{equation*}
\mathcal{U}_{2}=\left\{ s\left( i_{0},i_{1}\right) _{i_{0}\in \left[ m_{0}%
\right] ,i_{1}\in \left[ m_{1}\right] }:m_{0}\in \left[ b\right] ,b\in 
\mathbb{N},m_{1}\in \left\{ \pm \infty \right\} \right\}
\end{equation*}%
is the set of finite sequences of infinite sequences. As examples of
elements in $\mathcal{U}_{2}$, we have 
\begin{equation*}
\left( \left( 2,4,6,8,...,2n+2,...\right) ,\left(
3,5,7,9,...,2n+1,...\right) \right)  \in \mathcal{U}_{2}^{+}\subset 
\mathcal{U}_{2},
\end{equation*}%
and 
\begin{equation*}
 \left( \left( ...,2^{n+1},...,8,4,2\right) ,\left(
...,3^{n+1},...,9,3\right) ,(...5^{n+1},...,25,5)\right)\in \mathcal{U}%
_{2}^{-}\subset \mathcal{U}_{2},
\end{equation*}
Finally, the set 
\begin{equation*}
\mathcal{U}_{3}=\left\{ s\left( i_{0},i_{1}\right) _{i_{0}\in \left[ m_{0}%
\right] ,i_{1}\in \left[ m_{1}\right] }:l\in \left\{ \pm \infty \right\}
,m_{0}\in \left\{ l\right\} ,m_{1}\in \left\{ l\right\} \right\}
\end{equation*}%
is the set of infinite sequences of infinite sequences. As an example of an
element in $\mathcal{U}_{3}$, we have the sequence of sequences of powers of primes
\begin{equation*}
 \left( \left( 2,4,...,2^{n+1},...\right) ,\left(
3,9,...,3^{n+1},...\right) ,(5,25,...,5^{n+1},...),...\right) \in 
\mathcal{U}_{3}^{+}\subset \mathcal{U}_{3}.
\end{equation*}
\end{exmp}

\begin{remark} 
Elements such as
\begin{equation*}
\left( \left( ...,2n+1,...,9,7,5,3\right), \left( 2,4,6,8,...,2n+2,...\right)
\right)
\end{equation*}%
which as order type $\omega ^{\ast }+\omega $, and 
\begin{equation*}
\left( \left( 2,4,6,8,...,2n+2,...\right) ,\left( ...,2n+1,...,9,7,5,3\right)
\right)
\end{equation*}%
which has order type $\omega +\omega ^{\ast }$, are not uniform elements and therefore do not belong to any $\mathcal{U}_{k}$. We will deal with these \emph{mixed sets} in the following section. 
\end{remark}

\begin{definition} \label{sign}
Given a sequence $(s_{k})_{k\in A}$, indexed by a set $A$, we define its \emph{sign} function $\sgn$ by 
\begin{equation*}
\sgn((s_{k})_{k\in A})=%
\begin{cases}
+1, & A=[+\infty ] \\ 
0, & A=[n,m], n,m \in \mathbb{Z}, n\leq m \\ 
-1, & A=[-\infty ]%
\end{cases}%
\end{equation*}
\end{definition}

\begin{definition}\label{degree}
Given $S\in \mathcal{U}$, there exists some $d\in \mathbb{N}$ such
that $S\in \mathcal{U}_{d}$. The natural number $d$ is called the \emph{degree} of $%
S$ in $\mathcal{U}$, and is denoted by $\delta \left( S\right) :=d$.
\end{definition}

\begin{definition}
The \emph{sign} of an element $S\in \mathcal{U}$ of degree $d$, is defined by $\sgn\left(
S\right) =\varepsilon $ if $S\in \mathcal{U}_{d}^{\varepsilon }$, $%
\varepsilon \in \left\{ -,+\right\} $, if $d>0$ and $\sgn\left( S\right) =0$
if $d=0$.
\end{definition}

Note that the number of indices needed for describing an element $S\in 
\mathcal{U}$ is equal to $\lfloor d /2\rfloor +1$, with $d=\delta \left( S\right)$.
Recall, from Definition~\ref{def-unif}, that if $S\in \mathcal{U}_{2t}$, $%
d=2t$, then the number of indices needed is $t+1=d/2+1$. If $S\in \mathcal{U}%
_{2t+1}$, $d=2t+1$, then the number of indices needed is $t+1=(d-1)/2+1$.



\begin{definition}\label{segment}
A \emph{segment} of an element $S\in \mathcal{U}$ is a sequence with degree $\delta
\left( S\right) $. More precisely, for fixed $I\in \Pi _{k-1}$, where $k=\lfloor \delta \left( S\right)/2 \rfloor+1$, a segment is given by 
\begin{equation*}
s\left( I,i_{k}\right) _{i_{k}\in \left[ m_{k}\right] }.
\end{equation*}
\end{definition}

\begin{remark} Each element $S\in \mathcal{U}$ determines, in a natural way, a linearly ordered set of natural numbers through the indexing of its parts. A segment is a ladder if $\sgn\left( S\right) =1$ and a snake if $%
\sgn\left( S\right) =-1$. A bench corresponds precisely to an element of  $\mathcal{U}_{0}$. 
Consequently, elements of $\mathcal{U}_{0}$ are benches, elements of $%
\mathcal{U}_{d}^{+}$ are sequences (possibly iterated) of ladders, and the
elements of $\mathcal{U}_{d}^{-}$ are sequences (possible iterated) of snakes. 
\end{remark}

\subsection{Sets with mixed degrees}\label{S:Mixed_degrees}


In the previous section, uniform sets were used to represent  structures composed exclusively of ladders, snakes, and benches of a fixed degree. General decomposable shuffles, however, may consist of successive components of different degrees.

In this section, we introduce a framework for representing such mixed structures by assembling uniform elements of possibly different degrees into finite or infinite sequences. An address map is defined to locate individual natural numbers within these mixed representations, allowing a total order to be recovered via lexicographic comparison of addresses. 

Let%
\begin{equation*}
\left\langle {\cdot }\right\rangle :\mathcal{U}\rightarrow 
\mathcal{P}\left( \mathbb{N}\right) 
\end{equation*}%
\begin{equation*}
\left\langle s\left( I\right) _{I\in \Pi _{k}}\right\rangle :=\left\{ s\left(
I\right) :I\in \Pi _{k}\right\} .
\end{equation*}

This map associates to each element $S$ of $\mathcal{U}$ the set of natural numbers appearing in the sequences that constitute $S$.

We now consider the set $\bigcup_{k\geq 1}\mathcal{U} ^{k}$
of finite sequences of elements in $\mathcal{U}$, as well as the set $%
 \mathcal{U} ^{\mathbb{N}}$ of infinite sequences of such
elements in $\mathcal{U}$. Also, we let  $\mathcal{U}_{\infty }:=\left(\bigcup_{k\geq 1} \mathcal{U%
} ^{k}\right)\cup \mathcal{U} ^{\mathbb{N}}
$.\

The extension of the map $\left\langle {\cdot }\right\rangle $ to $\mathcal{U}_{\infty }$ is
straightforward 
\begin{equation*}
\left\langle {\cdot }\right\rangle :\mathcal{U}_{\infty }\rightarrow \mathcal{P}\left( \mathbb{N}%
\right) 
\end{equation*}%
\begin{equation*}
\left\langle \left( S_{1},S_{2},...,S_{k}\right) \right\rangle
:=\bigcup_{t=1}^{k}\left\langle S_{t}\right\rangle, 
\end{equation*}%
\begin{equation*}
\left\langle \left( S_{1},S_{2},\ldots ,S_{k},\ldots \right) \right\rangle
:=\bigcup_{t=1}^{\infty }\left\langle S_{t}\right\rangle. 
\end{equation*}

Every element in $\mathcal{U}_{\infty }$  can be written in the form 
\begin{equation}
S=\left( S_{t}\right) _{t\in \left[ m\right] },  \label{S}
\end{equation}

for some $m\in \left[ +\infty \right] $, where $S_{t}\in \mathcal{U}$, $t\in \left[ m\right] $. On the other hand, each $S_{t}$, $t\in \left[ m%
\right] $, can be written in the form 
\begin{equation*}
S_{t}=s\left( t\right) \left( I\right) _{I\in \Pi _{t,k_{t}}},t\in 
\left[ m\right] 
\end{equation*}%
where, in this case, $I=\left(
i_{t,1},i_{t,2},...,i_{t,k_{t}}\right) $ and $$\Pi _{t,k_{t}}=\left[ m_{t,1}%
\right] \times \left[ m_{t,2}\right] \times ...\times \left[ m_{t,k_{t}}%
\right], $$ or simply 
\begin{equation}
S=s\left( t\right) \left( I\right) _{t\in \left[ m\right]
,I\in \Pi _{t,k_{t}}}.  \label{SS}
\end{equation}%

\begin{remark}
For distinct values $t\in \left[ m\right] $, the components $%
S_{t}\in \mathcal{U}$ may have different degrees. Consequently, the number of indices required to describe $S_{t}$ may depend on $t$, which motivates the notation $k_{t}$. Likewise, the corresponding cardinalities $m_{1}$, $m_{2},\dots$  also depend on $t$, and we therefore denote them by $m_{t,i_{1}}$, $m_{t,i_{2}},\dots,m_{t,k_{t}}$.

Each element $S$ of $\mathcal{U}_{\infty }$
admits a first layer decomposition
$S=(S_{t})_{t\in[m]}$,
as described in \eqref{S}, where each component $S_{t}$ belongs to a uniform
set, that is, $S_{t}\in\mathcal{U}$. In this case, when specifying an element of $S$ of $\mathcal{U}_{\infty }$ we separate the first index $t\in \left[ m\right]$ from the remaining indices $I\in \Pi _{t,k_{t}}$, as in (\ref{SS}). 

For each uniform component $S_{t}$, there is an associated degree, $d=\delta \left( S_{t}\right)$ and sign, $\varepsilon =\sgn\left( S_{t}\right)$,  so
that $S_{t}\in \mathcal{U}%
_{d }^{\varepsilon }$,
as defined in Definition~\ref{uniform-sets}. It is therefore natural to define
the degree and the sign of an element $S \in \mathcal{U}_{\infty }$ as the sequences of degrees and signs of its uniform components
$(S_{t})_{t\in[m]}$.
\end{remark}

\begin{definition} The \emph{degree} and the \emph{sign} of 
$S=(S_{t})_{t \in [m]}\in \mathcal{U}_{\infty }$ 
are defined, respectively, by 
\begin{equation*}
\label{Deg-sgn}
\begin{split}
\delta \left( S\right) &:=\left( m,\delta \left( S_{t}\right)_{t\in 
\left[ m\right] } \right),\\
\sgn\left( S\right) &:=\left( \sgn(m),\sgn\left( S_{t}\right)_{t\in \left[ m\right]}
\right).  
\end{split}
\end{equation*}
\end{definition}

An element $$S=s\left( t\right) \left( I\right) _{t\in \left[ m\right],I\in \Pi _{t,k_{t}}} \in \mathcal{U}_{\infty },$$ is said to be \emph{injective} if whenever $t\neq p, I \neq J$,  then $S(t)(I) \neq S(p)(J)$, for $t\in \left[ m\right],I\in \Pi _{t,k_{t}}$, $p\in \left[ m\right],J\in \Pi _{p,k_{p}}$. 

We define the set%

\begin{equation}
\mathcal{S}_{\infty }:=\left\{ S\in \mathcal{U}_{\infty }:\left\langle {S }\right\rangle =\mathbb{N}%
,S\text{ injective}\right\}.  \label{S-infinity}
\end{equation}%
An element $S$ in $\mathcal{S}_{\infty }$ is thus a (finite or
infinite) sequence of elements in $\mathcal{U}$ whose constituent sequences jointly cover $\mathbb{N}$ without overlap. In this sense, $S$ defines a shuffle. Equivalently, the natural numbers that appear in the components of $S$ form a partition of $\mathbb{N}$.

Given $S\in \mathcal{S}_{\infty }$, it is important to be able to identify individual numbers within the structure determined by $S$, as well as their position relative to the other components. This is achieved through the notion of \textit{address}, introduced in the following definition. The address map also enables the explicit identification of the order structure induced by $S\in \mathcal{S}_{\infty }$, by formalizing it as
the lexicographic order on the indices of the components of $S$, as shown in Theorem \ref{T:Totalorder} below.

\begin{definition}
Let $ S\in \mathcal{S}_{\infty }$, and $s\left( t\right) \left(
i_{t,1},i_{t,2},\dots,i_{t,k_{t}}\right) $, be an element of $S$ with $t\in \left[
m\right] $, $i_{t,1}\in \left[ m_{t,1}\right],$ $i_{t,2}\in \left[ m_{t,2}%
\right] , \dots, i_{t,k_{t}}\in \left[ m_{t,k_{t}}\right] $. The \emph{address map} is defined by 

\begin{equation*}
\ad_{S}:\mathbb{N} \longrightarrow  \bigcup_{t\in [m]}  \left\{ t \right\}\times \mathbb{Z}^{k_{t}}
\end{equation*}

\begin{equation}
s\left( t\right) \left( i_{t,1},i_{t,2},\dots,i_{t,k_{t}}\right) \overset{\ad_{S}}{%
\longrightarrow }\left( t,i_{t,1},i_{t,2},\dots,i_{t,k_{t}}\right) \in \mathbb{%
Z}^{1+k_{t}}.  \label{address}
\end{equation}
\end{definition}

Given $S\in \mathcal{S}_{\infty }$ the address map (\ref{address}) is
defined globally on $\mathbb{N}$. That is, each natural number admits a unique address indicating its position within the structure determined by $S$. We denote this correspondence by  $\ad_{S}$. When $S$ is clear from the context, we simply write  $\ad$.

\begin{remark} Note that $\ad_{S}$, for $S \in  \mathcal{S}_{\infty }$, is invertible and its inverse is precisely $S$ as a function on its indices. 
\end{remark}

\begin{exmp} \label{example-ad}
\begin{enumerate}
\item[$(a)$] Let $S=\left( S_{0},S_{1}\right) $, with $S_{0}=(2i)_{i%
\in \left[ +\infty \right] }\in \mathcal{U}_{1}$, $S_{1}=(2i+1)_{i%
\in \left[ +\infty \right] }\in \mathcal{U}_{1}$. Therefore, $S$ is a finite
sequence of infinite sequences and belongs to $\mathcal{U}_{2}$, with%
\begin{equation*}
S=s\left( i_{0},i_{1}\right) _{i_{0}\in \left[ 1\right] ,i_{1}\in \left[
+\infty \right] }=\left( 2i_{1}+i_{0}\right) _{i_{0}\in \left[ 1\right]
,i_{1}\in \left[ +\infty \right] }\in \mathcal{U}_{2}\text{.}
\end{equation*}%
\begin{equation*}
S=\left( \left( 0,2,4,6,8,...\right) ,\left( 1,3,5,...\right) \right) 
\end{equation*}%
Note that $S\in \mathcal{S}_{\infty }$, since $\left\langle {S }\right\rangle=\mathbb{N}$, and distinct indices give distinct values. 
So, for example,
\begin{eqnarray*}
3 &=&s\left( 1, 1\right),\\
4 &=&s\left( 0, 2\right), 
\end{eqnarray*}

and the corresponding addresses are%
\begin{eqnarray*}
\ad\left( 3\right) &=&\left( 1,1\right),\\  
\ad\left( 4\right) &=&
\left( 0,2\right). 
\end{eqnarray*}

\item[$(b)$] Let $S=\left( S_{0},S_{1},S_{2}\right) $, with $%
S_{0}=(2^{j})_{j\in \left[ +\infty \right] }\in \mathcal{U}_{1}$, $%
S_{1}=(3^{j})_{j\in \left[ +\infty \right] }\in \mathcal{U}_{1}$, $%
S_{2}=(4^{j})_{j\in \left[ +\infty \right] }\in \mathcal{U}_{1}$. Therefore, 
$S$ is a finite sequence of infinite sequences and belongs to $\mathcal{U}%
_{2}$, with%
\begin{equation*}
S=s\left( i_{0},i_{1}\right) _{i_{0}\in \left[ 2\right] ,i_{1}\in \left[
+\infty \right] }=\left( \left( 2+i_{0}\right) ^{i_{1}+1}\right) _{i_{0}\in %
\left[ 2\right] ,i_{1}\in \left[ +\infty \right] }\in \mathcal{U}_{2}\text{.}
\end{equation*}%
\begin{equation*}
S=\left( \left( 2,4,8,...\right) ,\left( 3,9,27,...\right) ,\left(
4,16,64,...\right) \right)
\end{equation*}%

Note that some elements, e.g.,  1 and 5 are not in any element of $S$
and  $S\notin \mathcal{S}_{\infty }$, since $\left\langle S\right\rangle
=\left\{ 2,3,4,8,9,16,27,...\right\} \neq \mathbb{N}$, and there are
distinct indices giving the same value, e.g., $s\left( 0,1\right) =s\left(
2,0\right) =4$.

\item[$(c)$] Consider the following variation on the \v{S}arkovski\u{i} order (\ref{E:Shar_order}), where the least element is $1$ and the maximal element is $3$. 
\begin{eqnarray*}
S &=&\left(\left( 1,2,2^{2},2^{3},...\right) ,...,\left( ...,7\times 2^{k},5\times
2^{k},3\times 2^{k}\right) ,... \right.\\
&&\left. ...,\left( ...,7\times 2,5\times 2,3\times 2\right) ,(...,7,5,3)\right)\in \mathcal{U}_{\infty }
\end{eqnarray*}

Let $S=\left( S_{0},S_{1}\right) \in \mathcal{U}_{\infty }$, with 
$$S_{0}=(2^i)_{i\in \left[ +\infty \right] }\in \mathcal{U}_{1},$$ 

$$S_{1}=(\left( -2j+1\right) 2^i)_{i\in \left[ -\infty \right]
,j\in \left[ -\infty \right] }\in \mathcal{U}_{3}.$$ 

Except for $0$, every natural number is represented in $S$ exactly once. As such, $S$ does not belong to $ \mathcal{S}_{\infty }$. However this can be easily overcome by including the element $0$ in the first sequence $S_0$, or in another position.

We represent the
generic element of $S$, following (\ref{SS}), as%
\begin{equation*}
S=s\left( t\right) \left( i_{t,1},i_{t,2},...,i_{t,k_{t}}\right) _{t\in
\lbrack 1],i_{t,1}\in \left[ m_{t,1}\right] ,i_{t,2}\in \left[ m_{t,2}\right]
,...,i_{t,k_{t}}\in \left[ m_{t,k_{t}}\right] }
\end{equation*}%
with $k_{0}=1$, $k_{1}=2$, $m_{0,1}=+\infty $, $m_{1,1}=-\infty $, $%
m_{1,2}=-\infty $, thus 
\begin{eqnarray*}
s\left( 0\right) \left( i_{0,1}\right) &=&\left( 2^{i_{0,1}}\right)
_{i_{0,1}\in \left[ +\infty \right] }, \\
s\left( 1\right) (i_{1,1},i_{1,2}) &=&\left( \left( -2i_{1,2}+1\right)
2^{-1-i_{1,1}}\right) _{i_{1,1}\in \left[ -\infty \right] ,i_{1,2}\in \left[
-\infty \right] }
\end{eqnarray*}

In this case, 
\begin{equation*}
s\left( 0\right) =S_{0}=\left( 1,2,2^{2},2^{3},...\right) \in \mathcal{U}%
_{1},
\end{equation*}%
\begin{equation*}
s\left( 1\right) =S_{1}=\left(...\left( ...,7\times 2^{k},5\times 2^{k},3\times
2^{k}\right) ,...\right.
\end{equation*}%
\begin{equation*}
\left. ...,\left( ...,7\times 2,5\times 2,3\times 2\right) ,(...,7,5,3)\right)\in \mathcal{%
U}_{3}.
\end{equation*}%
We have for example%
\begin{eqnarray*}
2 &=&s\left( 0\right) \left( 1\right) \\
36 &=&9\times 2^{2}=s\left( 1\right) \left( -3,-4\right)
\end{eqnarray*}%
and the respective addresses are%
\begin{eqnarray*}
\ad\left( 2\right) &=&\left( 0,1\right) \\
\ad\left( 36\right) &=&\left( 1,-3,-4\right).
\end{eqnarray*}
\end{enumerate}
\end{exmp}

\subsection{Orders from elements of $ \mathcal{S}%
_{\infty }$}\label{S:Orders_from_Sinfty}

In this subsection, we describe how elements of $\mathcal{S}_{\infty}$ induce
total orders on the set of natural numbers. Using the address map, each natural number is assigned a unique position within the hierarchical structure encoded by $S$, allowing the corresponding order to be defined explicitly. This construction provides a direct link between the combinatorial representation of mixed sets and the resulting order structure on $\mathbb{N}$, in particular the decomposable shuffles. 

We now define the total order in $\mathbb{N}$ associated with an element $S\in \mathcal{S}%
_{\infty }$.
\begin{definition}
 Let $x,y\in \mathbb{N}$, and let $\ad(x)$, $%
\ad(y) $ denote their respective addresses. We define $x\prec _{S}y$ if and only if 
\begin{equation*}
\ad(x)\vartriangleleft \ad(y),
\end{equation*}%
where $\vartriangleleft $ denotes order induced by the lexicographic order on $\bigcup_{k\geq
1}\mathbb{Z}^{k}$, in which the order of the entries is
the usual order on integer numbers, which is a total order.
\end{definition}

Recall that the lexicographic order $\vartriangleleft $ on $\bigcup_{k\geq 1}\mathbb{Z}%
^{k}$ is defined by

\begin{equation*}
\left( x_{0},...,x_{p}\right) \vartriangleleft \left( y_{0},...,y_{q}\right)
,p,q\in \mathbb{N}
\end{equation*}
if and only one of the following holds: (1) there exist $r\leq \min \left\{
p,q\right\} $ such that $x_{i}=y_{i}$ for all \ $i<r$, and $x_{r}<y_{r}$ or
(2) $p<q$ and $x_{i}=y_{i}$ for all $i=0,1,...,p$. \ The second condition
ensures that sequences of different lengths are comparable, making $%
\vartriangleleft $ a total order on $\bigcup_{k\geq 1}\mathbb{Z}^{k}$.

\begin{theorem}\label{T:Totalorder}
Each element $S\in \mathcal{S}_{\infty }$ determines a unique total order, $\prec _{S}$, on the
natural numbers induced by $\vartriangleleft $.
\end{theorem}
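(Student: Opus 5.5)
The plan is to realize $\prec_S$ as the pullback of the lexicographic order $\vartriangleleft$ along the address map, and to check that pulling back a strict total order along an injective map yields a strict total order. Uniqueness is then immediate: $\prec_S$ is defined explicitly from $S$ through $\ad_S$, and by the remark following the definition of the address map, $\ad_S$ is determined by $S$ (its inverse is $S$ viewed as a function of its indices).

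The first step is to show that $\ad_S:\mathbb{N}\to\bigcup_{k\geq1}\mathbb{Z}^k$ is a well-defined injective map. Since $\langle S\rangle=\mathbb{N}$, every $x\in\mathbb{N}$ occurs as some $s(t)(I)$, so $\ad_S$ is defined on all of $\mathbb{N}$. Injectivity of $S$ means that distinct index tuples $(t,I)\neq(p,J)$ give distinct values. Hence each $x$ has exactly one index tuple, and $\ad_S$ is a function. It is injective because it is the inverse of the map $(t,I)\mapsto s(t)(I)$ on the index set. Here one should note that the definition of injectivity is written with ``$t\neq p, I\neq J$''. I will read it as ``$(t,I)\neq(p,J)$'', which is clearly the intended meaning, since otherwise single-element repetition within one component would not be excluded and addresses would not be unique.

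The second step is to verify that $\vartriangleleft$ is a strict total order on $\bigcup_{k\ge1}\mathbb{Z}^k$, taking into account that tuples may have different lengths. Irreflexivity is clear from both clauses. For trichotomy, take $x\neq y$. Either they differ at some first index $r\le\min\{p,q\}$, in which case clause (1) orders them one way, or one tuple is a proper prefix of the other, in which case clause (2) applies. Transitivity requires a short case analysis on the first disagreement positions, covering a prefix combined with a first difference, and similar combinations. This bookkeeping is the only real obstacle, and it is routine. One can avoid most of it by identifying each tuple with an infinite word padded by a symbol $-\infty$ that is smaller than every integer, which turns clause (2) into clause (1). One can also note that within a single component all addresses have the same length $1+k_t$, so mixed lengths only arise between components, where the first coordinate $t$ already decides the comparison.

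Finally, I will define $x\prec_S y\iff\ad(x)\vartriangleleft\ad(y)$. Irreflexivity and transitivity transfer directly. Totality, meaning that $x\ne y$ implies $x\prec_S y$ or $y\prec_S x$, follows from the injectivity of $\ad$, which gives $\ad(x)\ne\ad(y)$, combined with the trichotomy of $\vartriangleleft$. So $\prec_S$ is a strict total order on $\mathbb{N}$, it is uniquely determined by $S$, and the proof of the statement is complete.
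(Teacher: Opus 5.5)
Your proposal is correct and follows the paper's own argument: $\ad_S$ is well defined because $\langle S\rangle=\mathbb{N}$ and $S$ is injective, $\prec_S$ is the pullback of the total order $\vartriangleleft$ along $\ad_S$, and $\prec_S$ is unique because it is determined by $\ad_S$. You go further than the paper by supplying details it leaves implicit. You check that $\vartriangleleft$ is a strict total order and that irreflexivity and transitivity carry over to $\prec_S$. You also note that injectivity of $S$ must be read as $(t,I)\neq(p,J)$, and that any two addresses of different lengths already differ in their first coordinate $t$, so the prefix clause of $\vartriangleleft$ is never actually used.
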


\begin{proof}
  Let $S\in \mathcal{S%
}_{\infty }$ and $x,y\in \mathbb{N}$, $x\neq y$. From the fact that $%
\left\langle S\right\rangle =\mathbb{N}$ we have that the numbers $x,y$ are included in $S$
with addresses $\ad(x),\ad(y)\in \bigcup_{k\geq 1}\mathbb{Z}^{k}$. Since $S\in \mathcal{S}_{\infty }$ we have that $S$ is injective and therefore the addresses are unique.  Hence, $x,y$ are comparable
and either $x\prec _{S}y$ if $\ad(x)\vartriangleleft \ad(y)$ or $y\prec _{S}x$
if $\ad(y)\vartriangleleft \ad(x)$, so $\prec_S$ is a total order on  $\mathbb{N}$.
Uniqueness follows from the fact that $\prec_{S}$ is entirely determined by
the address map $\ad_{S}$.
\end{proof}

\begin{exmp}
\begin{enumerate}

\item[$(a)$] Consider again Example \ref{example-ad} (a). We have $\ad(5)=\left( 1,2\right) $, $%
\ad(8)=\left( 0,4\right) $, $\ad(21)=\left( 1,10\right) $, $\ad(22)=\left(
0,11\right) $, and so%
\begin{equation*}
\left( 0,4\right) \vartriangleleft \left( 0,11\right) \vartriangleleft
\left( 1,2\right) \vartriangleleft \left( 1,10\right) \Leftrightarrow 8\prec
22\prec 5\prec 21
\end{equation*}

\item[$(b)$] Consider again Example \ref{example-ad}  (c). We have $%
\ad(3)=\left( 1,-1,-1\right) $, $\ad(4)=\left( 0,2\right) $, $\ad(14)=\left(1,-2,-3
\right), \ad(15)=\left(1,-1,-7\right)$, and so
\begin{equation*}
\begin{split}
 \left( 0,2\right) &\vartriangleleft \left(1,-2,-3
\right) \vartriangleleft
\left( 1,-1,-7\right) \vartriangleleft \left( 1,-1,-1\right)
\\
& \Leftrightarrow 4\prec 14\prec 15\prec 3.
\end{split}%
\end{equation*}
\end{enumerate}
\end{exmp}

The following result gives the explicit relation between elements of $\mathcal{S}_{\infty }$ and shuffles.

\begin{proposition}
Let $S=s\left( t\right) \left( I \right) _{t\in %
\left[ m\right] ,I\in \Pi _{t,k_{t}}} \in \mathcal{S}_{\infty }$. The   shuffle associated with the
order $\prec _{S}$ is given by $(\mathcal{P}%
_{S},\prec _{S})$, where each part of the partition $\mathcal{P}_{S}$ is 
\begin{equation*}
P\left( t\right) \left( J\right) =\left\{  
s\left( t\right) \left( J,i_{t,k_{t}}\right)  
:i_{t,k_{t}}\in \left[ m_{t,k_{t}}\right] \right\} ,
\end{equation*}%
\begin{equation*}
t\in \left[ m\right] ,J\in \Pi _{t,k_{t}-1}.
\end{equation*}
\end{proposition}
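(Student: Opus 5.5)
The proof checks the two clauses of the definition of a shuffle for the pair $(\mathcal{P}_S,\prec_S)$. By Theorem~\ref{T:Totalorder}, $\prec_S$ is already a strict total order on $\mathbb{N}$, so it remains to show that $\mathcal{P}_S$ is a partition of $\mathbb{N}$ and that each part is convex with respect to $\prec_S$.

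\emph{Partition.} The sets $P(t)(J)$ are nonempty, since each domain $[m_{t,k_t}]$ is nonempty. Their union is $\langle S\rangle=\mathbb{N}$ because $S\in\mathcal{S}_\infty$. For disjointness, suppose $x\in P(t)(J)\cap P(p)(J')$. Then $x=s(t)(J,i)=s(p)(J',i')$. Injectivity of $S$, equivalently the invertibility of $\ad_S$ noted in the Remark after the definition of the address map, forces $(t,J,i)=(p,J',i')$. In particular $(t,J)=(p,J')$, so the two parts coincide.

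\emph{Convexity.} Let $x,y\in P(t)(J)$ with $x\prec_S z\prec_S y$, where $z\in P(p)(J')$. Write $\ad(x)=(t,J,i)$, $\ad(y)=(t,J,j)$ and $\ad(z)=(p,J',l)$. The addresses of $x$ and $y$ share the prefix $(t,J)$ and have the same length $1+k_t$. The key step is the following elementary fact about $\vartriangleleft$: if $a\vartriangleleft c\vartriangleleft b$ and $a,b$ agree in their first $r$ entries, then $c$ agrees with them in those entries as well.

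To prove it, let $q$ be the first index at which $c$ differs from the common prefix, and suppose $q<r$. Say $c_q<a_q=b_q$. Then $c\vartriangleleft a$, a contradiction. If instead $c_q>a_q=b_q$, then $b\vartriangleleft c$, again a contradiction. The remaining case is that $c$ is shorter than $r$ and is a proper prefix of $a$. Then clause (2) of the definition of $\vartriangleleft$ gives $c\vartriangleleft a$, which is impossible.

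So $\ad(z)$ begins with $(t,J)$. This gives $p=t$, so the length of $\ad(z)$ is $1+k_t$, and therefore $J'=J$. Hence $z\in P(t)(J)$, as required.

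The main obstacle is exactly this handling of addresses of different lengths under clause (2) of the lexicographic order. Here it is harmless: agreement in the first entry $t$ already fixes the length of the address as $1+k_t$.

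Finally, the parts have the intended types. Restricted to $P(t)(J)$, the order $\prec_S$ is the usual order on the last index $i_{t,k_t}\in[m_{t,k_t}]$. Each part is therefore a ladder, a snake or a bench according to whether $\sgn$ of the segment is $+1$, $-1$ or $0$, matching Definition~\ref{segment}.
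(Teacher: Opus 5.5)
Your proof is correct and follows the same route as the paper's. Coverage and injectivity of $S$ give the partition, and each last-layer sequence $s(t)(J,\cdot)$ is $\prec_S$-convex; the difference is that you actually prove the convexity through the common-prefix property of $\vartriangleleft$, while the paper only asserts that these sequences consist of ``consecutive elements'' (and one harmless nit: the definition of $\mathcal{U}_0$ allows the empty sequence, so $[m_{t,k_t}]$ need not be nonempty for a degree-$0$ component, and any resulting empty set should simply be dropped from $\mathcal{P}_S$).
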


\begin{proof}
From the definition of the set $\mathcal{S}_{\infty }$, every natural number occurs exactly once in $S\in \mathcal{S}_{\infty }$. By construction, $S$ is a sequence of uniform elements, each of which is represented as a sequence of sequences. The last layer of sequences, as in Definition~\ref{segment}, $s\left( t\right) \left( J,i_{k}\right) _{i_{k}\in \left[ m_{k}\right] }$, for fixed $t\in[m]$ and $J\in \Pi _{t,k_{t}-1}$, gives  the
segments or benches, of consecutive elements in the associated order $\prec _{S}$. Therefore, as a set, they constitute a part $P\left( t\right)
\left( i_{t,1},i_{t,2},...,i_{t,k_{t}-1}\right) $, forming a partition of $%
\mathbb{N}$, composed of benches and segments, that is, consecutive elements. Therefore, $(\mathcal{P}%
_{S},\prec _{S})$ is a shuffle.
\end{proof}

\begin{remark}
Different elements in $\mathcal{S}_{\infty }$ may induce
the same order in $\mathbb{N}$. To see this, consider $$S=\left( \left(
...,10,8,6,4\right) ,\left( 2,1,3,5,7,9,...\right) \right) \in \mathcal{U}%
_{2}$$ and $$S^{\prime }=\left( \left( ...,10,8,6,4,2\right) ,\left(
1,3,5,7,9,...\right) \right) \in \mathcal{U}_{2}.$$ Both
elements $S$ and $S^{\prime }$ give the same order $\prec _{S}=$ $\prec
_{S^{\prime }}$. This redundancy is a  consequence of the presence of
consecutive segments in which a snake precedes a ladder. In the examples above, the snake
$\left( ...,10,8,6,4,2\right) $ precedes the ladder $\left(
1,3,5,7,9,...\right) $. As discussed
in Theorem~\ref{canonical-prop}, such configurations correspond to
shuffles whose canonical partition is not unique.

If the partition associated with a given shuffle is not canonical, distinct elements in $\mathcal{S}_{\infty }$ may also induce the same total order. For example 

$$ S=\left( 1,0,3,2,5,4,...\right)  \in \mathcal{U_1} $$ and 
$$ S^{\prime }=\left(\left( 1,0,3\right), \left( 2,5,4,...\right)\right) \in \mathcal{U}_0 \cup \mathcal{U}_1 \subset \mathcal{U_{\infty}},$$ define the same order in $\mathbb{N}$. The shuffle associated with $S^{\prime }$ is a bench+ladder, and $S$ consists of a single ladder.
Therefore, the canonical partition gives an optimized manner to represent orders through uniform/mixed sets. 
On the other hand, not every total order on $\mathbb{N}$  can be represented by an element in $\mathcal{S}_{\infty }$. In fact, as shown in Proposition~\ref{P:induced} below, the class of such orders is somehow restricted. 
\end{remark}

\begin{proposition}\label{P:induced}
The order on $\mathbb{N}$ induced by the usual order on the rationals cannot arise from any $S\in \mathcal{S}_{\infty }$.        
\end{proposition}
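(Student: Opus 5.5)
We argue by contradiction, using only that the order $\prec_S$ is built from segments and benches. Suppose some $S\in\mathcal{S}_{\infty}$ satisfies $\prec_S=\prec_{\mathbb{Q}}$. Here $\prec_{\mathbb{Q}}$ denotes the order on $\mathbb{N}$ transported from $(\mathbb{Q},<)$ via a fixed bijection. It is dense and has no endpoints.

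By the Proposition preceding Proposition~\ref{P:induced}, $S$ determines a shuffle $(\mathcal{P}_S,\prec_S)$. Each part of $\mathcal{P}_S$ is a bench, a ladder or a snake, i.e. it has order type $n$, $\omega$ or $\omega^*$. Moreover each part is convex: by the definition of a shuffle, any $z$ lying between two elements of a part $X$ belongs to $X$.

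Next we observe that some part $X$ contains two elements $x\prec_S y$. The alternative is that every part is a singleton. Then every part is a bench of length one, so every segment of every component $S_t$ consists of a single element. This is impossible for the parts coming from components of degree $\geq 1$, since those segments are infinite sequences. If instead all components have degree $0$, then $S$ is a sequence of finite benches indexed by $t\in[m]$ with $m\leq+\infty$. In that case the address order is $t$ first and then position within the bench, so $\prec_S$ has order type at most $\omega$ (or is finite). That is not dense, already a contradiction. In either case we obtain a part $X$ containing $x\prec_S y$.

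Now we use the order type of $X$. Since $X$ has type $n$, $\omega$ or $\omega^*$, every element of $X$ other than the last (if there is one) has an immediate successor within $X$. Take $x$, which is not the last element of $X$ because $y\in X$ lies above it. Let $x'$ be its immediate successor in $X$. By density of $\prec_{\mathbb{Q}}$ there is $z\in\mathbb{N}$ with $x\prec z\prec x'$. By convexity of $X$, $z\in X$, which contradicts $x'$ being the immediate successor of $x$ in $X$. Hence no such $S$ exists.

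The main obstacle is the bookkeeping step of justifying that some part has at least two elements. This requires checking that every degree $\geq1$ component has infinite last-level segments, and that a pure sequence of singleton benches yields a well-order of type at most $\omega$. Everything else is a direct consequence of density together with the fact that $n$, $\omega$ and $\omega^*$ are discrete order types.
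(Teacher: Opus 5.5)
Your proof is correct and uses the same idea as the paper's. Inside a segment or bench, an element has an immediate neighbour with nothing strictly between them (the paper reads this off the last address coordinate, you get it from convexity of the parts), which contradicts the density of the order transported from $\mathbb{Q}$; your explicit treatment of the case where every part is a singleton covers a point the paper's proof passes over.
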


\begin{proof}
Let $\phi :\mathbb{Q}\rightarrow \mathbb{N}$ be an enumeration of the
rationals, that is, a bijection. Let $\prec _{\phi }$ be the
order in $\mathbb{N}$ defined by%
\begin{equation*}
x\prec _{\phi }y\Leftrightarrow \phi ^{-1}\left( x\right) <\phi ^{-1}\left(
y\right) .
\end{equation*}%
We claim that there is no $S\in \mathcal{S}_{\infty }$ such that $\prec _{S}=$ $\prec
_{\phi }$. Indeed, for the order $\prec _{S}$ each element has a finite
address. Let $x\in \mathbb{N}$ and consider $\ad\left( x\right) =\left(
t,i_{t,1},i_{t,2},...,i_{t,k_{t}}\right) \in \mathbb{Z}^{k_{t}+1}$. Then, if $i_{t,k_{t}}\geq 0$, we have that 
$$x\prec _{S}s\left( t,i_{t,1},i_{t,2},...,i_{t,k_{t}}+1\right)\in \mathbb{N}$$ 

and there is no other $y\in \mathbb{N}$, so that 

$$x\prec _{S}y\prec _{S}s\left(
t,i_{t,1},i_{t,2},...,i_{t,k_{t}}+1\right).$$
The case where $i_{t,k_{t}}\leq 0$ is similar. Therefore,  $\left( \mathbb{N},\prec _{
S}\right)$ cannot be order isomorphic
to  $\left( \mathbb{Q},\prec _{\phi }\right)$.
\end{proof}

\begin{definition}
Let $S,\widetilde{S}\in \mathcal{S}_{\infty }$. We say that $S$ and $\widetilde{S}$ are \emph{order equivalent} if $\delta \left( S\right)
=\delta \left( \widetilde{S}\right) $ and $\sgn\left( S\right) =\sgn\left( 
\widetilde{S}\right) $. 
\end{definition}

It follows immediately that order equivalence on the set $\mathcal{S}_{\infty }$ is an equivalence relation.

\begin{theorem}
 Let $S,\widetilde{S}\in \mathcal{S}_{\infty }$. The ordered sets $%
\left( \mathbb{N},\prec _{S}\right)$, $\left( \mathbb{N},\prec _{\widetilde{
S}}\right)$  are order isomorphic if and only if  $S,\widetilde{S}$ are order equivalent, that is, $\delta \left( S\right)
=\delta \left( \widetilde{S}\right) $ and $\sgn\left( S\right) =\sgn\left( 
\widetilde{S}\right) $. 
\end{theorem}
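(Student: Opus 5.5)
The plan is to compute the order type of $(\mathbb{N},\prec_S)$ directly from $\delta(S)$ and $\sgn(S)$. Then "order equivalent $\Rightarrow$ isomorphic" is immediate, and the converse reduces to showing that this order type determines the degree and sign data.

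\textbf{Step 1: an explicit isomorphism.} Since $\ad_S$ is a bijection onto its image and $\prec_S$ is defined by $\vartriangleleft$, the map $\ad_S$ is an order isomorphism from $(\mathbb{N},\prec_S)$ onto its set of addresses $A_S=\bigcup_{t\in[m]}\{t\}\times\Pi_{t,k_t}$ with the lexicographic order. Within each block $\{t\}\times\Pi_{t,k_t}$ all tuples have the same length, so only clause (1) of $\vartriangleleft$ is used. Hence $A_S$ is the ordered sum over $t\in[m]$ of the lexicographic products $[m_{t,1}]\times\cdots\times[m_{t,k_t}]$. In particular $A_S$, and therefore $\prec_S$, depends only on $m$ and on the index ranges $m_{t,j}$.

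Next I check that these ranges are read off from $\delta(S_t)$, $\sgn(S_t)$ and the length data by Definition~\ref{def-unif}. For even degree, the outermost range is finite and all others are $\varepsilon\infty$; for odd degree, all ranges are $\varepsilon\infty$.

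There is a subtlety: for even degree the finite length $m_0$ is not recorded in $\delta$. I will read $\delta(S)$ as including that length, which the notation $\delta(S)=(m,\ldots)$ with $m$ recorded suggests. Otherwise the theorem fails, since for example $\omega+\omega\neq\omega+\omega+\omega$. With this reading, order equivalence gives identical index ranges. The identity on tuples then composes with $\ad_{\widetilde S}^{-1}\circ\ad_S$ to give the isomorphism, which settles the "if" direction.

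\textbf{Step 2: the converse.} Here I would compute the order type of each block. A block is $\omega^{t}$ for $\sgn=+$ and odd degree $2t-1$ or so, $(\omega^*)^{t}$ for $\sgn=-$, $n\cdot\omega^{t-1}$-type products for even degree, and $n$ for degree $0$. In each case the type is a lex product of copies of $\omega$, $\omega^*$ and a finite $n$. I then recover the data from the order type $\tau$ of $(\mathbb{N},\prec_S)$ by induction on structure:
\begin{itemize}
\item the sign $\pm$ of a block is detected by whether it is well-ordered or reverse-well-ordered, using the result from \cite{Cie};
\item the degree is detected by the Hausdorff/condensation rank, obtained by iterating the identification of elements at finite distance.
\end{itemize}

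\textbf{Main obstacle.} The hard part is recovering the decomposition into blocks $S_t$ from $\tau$ alone, because adjacent blocks can merge. A degree-$0$ bench followed by a ladder is just $\omega$, and a snake followed by a bench is $\omega^*$, as noted in Theorem~\ref{canonical-prop}. Strictly, then, the converse only holds when $S$ and $\widetilde S$ are in canonical form, with no absorbable benches and no ambiguous adjacent blocks of equal type and sign.

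I would first try to prove the converse under that normalization. The blocks would be recovered as maximal convex subsets of a fixed condensation rank and a homogeneous sign, and the finite $m_0$ of each even-degree block would be counted as the number of top-level components. If the normalization cannot be assumed, I would add it as a hypothesis, since the example of a bench followed by a ladder versus a single ladder shows it is necessary.
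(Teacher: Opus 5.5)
Your Step~1 is the paper's own argument for the ``if'' direction: the paper's map $\widetilde S\circ\ad_S$ is your $\ad_{\widetilde S}^{-1}\circ\ad_S$. Both of your caveats are correct.

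For the ``if'' direction, the paper simply asserts that equal $\delta$ and $\sgn$ give ``the same structure''. That is false for the finite lengths $m_{t,1}$ of even-degree blocks and of benches, as your example shows.

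For the converse, the paper argues that an isomorphism must send two elements of one segment into one segment, and this is exactly the step that fails. The paper's own Remark refutes the converse as stated: $(1,0,3,2,5,4,\dots)$ and $((1,0,3),(2,5,4,\dots))$ induce the same order but have different degrees. So your diagnosis is sounder than the paper's proof, and the statement as written cannot be proved.

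The gap is in your repair, at two points.

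\textbf{Rank does not detect degree.} Your claim that ``the degree is detected by the condensation rank'' fails. A degree-$2k$ block with $a\ge 2$ rows has type $\omega^k\cdot a$, and a degree-$(2k+1)$ block has type $\omega^{k+1}$. Both collapse to a point after exactly $k+1$ finite condensations. You would also have to record whether the $k$-th condensation is finite or of type $\omega$/$\omega^*$. Worse, a degree-$2k$ block with a single row induces literally the same order as a degree-$(2k-1)$ block. So even a single block, with canonical partition and no benches, does not determine $\delta$, whichever way you read it.

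\textbf{Your normalization misses some absorptions.} ``No absorbable benches, no adjacent blocks of equal type and sign'' does not exclude absorption between blocks of \emph{different} degree. A ladder followed by an element of $\mathcal{U}_3^+$ has type $\omega+\omega^2=\omega^2$, which is the type of a single element of $\mathcal{U}_3^+$. A two-row element of $\mathcal{U}_2^+$ followed by a ladder gives $\omega\cdot 2+\omega=\omega\cdot 3$, which is a single three-row block. For negative sign the same happens dually, with absorption on the right.

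The hypothesis you actually need is that $S$ and $\widetilde S$ are both the \emph{coarsest} representation. That means maximal top-level blocks, at least two rows in every even-degree block of positive degree, and no absorbed benches or lower-rank blocks. The real content is then to show that this coarsest decomposition is an isomorphism invariant. It would be recovered from $(\mathbb{N},\prec_S)$ by iterated finite condensation, keeping track of whether each quotient is finite, $\omega$ or $\omega^*$, together with the well-ordered versus reverse-well-ordered test. Your proposal only gestures at this step, and it still has to be carried out.
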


\begin{proof}
If $S,\widetilde{S}\in \mathcal{S}_{\infty }$ satisfy $\delta \left(
S\right) =\delta \left( \widetilde{S}\right) $ and $\sgn\left( S\right)
=\sgn\left( \widetilde{S}\right) $, the structure of both $S$ and $\widetilde{%
S}$ is equal, that is, 
\begin{equation*}
S=s\left( t\right) \left( i_{t,1},i_{t,2},...,i_{t,k_{t}}\right), 
\end{equation*}%
\begin{equation*}
t\in \left[ m\right] ,i_{t,1}\in \left[ m_{t,1}\right] ,i_{t,2}\in \left[
m_{t,2}\right] ,...,i_{t,k_{t}}\in \left[ m_{t,k_{t}}\right],
\end{equation*}%
and
\begin{equation*}
\widetilde{S}=\widetilde{s}\left( t\right) \left(
i_{t,1},i_{t,2},...,i_{t,k_{t}}\right), 
\end{equation*}%
\begin{equation*}
t\in \left[ m\right] ,i_{t,1}\in \left[ m_{t,1}\right] ,i_{t,2}\in \left[
m_{t,2}\right] ,...,i_{t,k_{t}}\in \left[ m_{t,k_{t}}\right]. 
\end{equation*}%

Therefore, an order preserving bijection can be defined through the address
map of $S$ and $\widetilde{S}$, 
\begin{equation*}
\phi :\mathbb{N\rightarrow N}
\end{equation*}%
\begin{equation*}
\phi \left( n\right) :=\widetilde{S}\circ \mathsf{ad}_{S}
\end{equation*}%
This map is clearly a bijection between $\mathbb{N}$ and $\mathbb{N}$ (each
map is bijective). It is order preserving since the underlying order is, in
both cases, the lexicographic order in $\bigcup_{t\in [m]}  \left\{ t \right\}\times \mathbb{Z}^{k_{t}}$. In fact, consider $x,y\in \mathbb{N}
$, with $x\prec y$. Then

\begin{equation*}
\left. 
\begin{array}{c}
x\prec _{S}y \\ 
\downarrow \text{\ \ \ }\downarrow  \\ 
\mathsf{ad}_{S}\left( x\right) \vartriangleleft \mathsf{ad}_{S}\left(
y\right)  \\ 
\downarrow \text{\ \ \ }\downarrow  \\ 
\widetilde{x}=\widetilde{S}\left( \mathsf{ad}_{S}\left( x\right) \right)
\prec _{_{\widetilde{S}}}\widetilde{y}=\widetilde{S}\left( \mathsf{ad}%
_{S}\left( y\right) \right) 
\end{array}%
\right. 
\end{equation*}
that is, given $\left( t,i_{t,1},i_{t,2},...,i_{t,k_{t}}\right) $ $\phi $ is
the correspondence between the two natural numbers $S\left(
t,i_{t,1},i_{t,2},...,i_{t,k_{t}}\right) $ and $\widetilde{S}\left(
t,i_{t,1},i_{t,2},...,i_{t,k_{t}}\right) $. Therefore, if two addresses have
an order relation the corresponding elements in $S$ will have the same order
relation.

On the other hand, assume that $\left( \mathbb{N},\prec _{S}\right) $, $%
\left( \mathbb{N},\prec _{\widetilde{S}}\right) \ $are order isomorphic,
that is, there is an order preserving bijection $\phi :\mathbb{N\rightarrow N%
}$. This means that ordered sequences of consecutive elements in $S$ must be
sent through $\phi $ to ordered sequences of consecutive elements in $%
\widetilde{S}$. If two numbers in $S$ are in the same segment, the images
under $\phi $ must be in the same segment and there is an order preserving
bijection between chains. Since in $S,\widetilde{S}$ the sequences of
sequences are also ordered, the argument repeats applied to ordered
sequences of consecutive sequences. Therefore, the structure of \ $S$, and $\widetilde{S}$ is the same, that is, $\delta \left( S\right) =\delta \left( 
\widetilde{S}\right) $ and $\sgn\left( S\right) =\sgn\left( \widetilde{S}%
\right) $.  
\end{proof}

Let $\ot$ denote the order type, that is, the equivalence class of order
equivalence in the corresponding shuffle.

\begin{theorem} \label{order-type}
Let $\ S\in \mathcal{U}_{\infty }$. The order type, $\ot(S)$, of $S$ is given as follows.
\begin{equation*}
\begin{cases}
\ot\left( S\right) =a\omega ^{k+1}, &S\in \mathcal{U}_{2k}^{+},k\in \mathbb{N}\\
\ot\left( S\right) =a\omega ^{\ast (k+1)}, &S \in 
\mathcal{U}_{2k}^{-},a\in 
\mathbb{N},k\in \mathbb{N}\\
\ot\left( S\right) =\omega ^{k+1}, &S\in \mathcal{U}_{2k+1}^{+},k\in \mathbb{N}\\
\ot\left( S\right) =\omega ^{\ast (k+1)}, &S\in 
\mathcal{U}_{2k+1}^{-},k\in 
\mathbb{N}
\end{cases}
\end{equation*}%
For $S=\left( S_{t}\right) _{t\in \left[ m\right] }\in \bigcup_{k\geq 1}^{\mathbb{N}}\left( \mathcal{U}_{\infty
}\right) ^{k}$, $m \in [+\infty]$, $S_{t}\in \mathcal{U}%
_{\infty }$ the order type is given by 
\begin{equation*}
\ot\left( S\right)
=\sum_{t\in \left[ m\right] }\ot\left( S_{t}\right),
\end{equation*}%
each term as
above.
\end{theorem}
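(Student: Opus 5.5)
I will prove the statement by induction on the number of indices, i.e.\ on $k=\lfloor \delta(S)/2\rfloor$. The order on the elements of $S$ is the lexicographic order $\vartriangleleft$ on addresses, as in Theorem~\ref{T:Totalorder}. The main tool is the standard fact from order-type arithmetic: a lexicographically ordered product indexed first by an outer index set of type $\beta$ and then by inner blocks of type $\alpha$ has type $\alpha\beta$, the ordinal-style product with the outer index written on the right. For finite sums this is the sum $\sum_{i\in\beta}\alpha$.

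\emph{Base case $k=0$.} An element of $\mathcal{U}_0$ is a finite sequence $(s_{i_0})_{i_0\in[b]}$. Its type is the finite ordinal $b$, which is the case $a\omega^{0}$ up to the degenerate reading of the formula. An element of $\mathcal{U}_1^{+}$ is indexed by $[+\infty]=\mathbb{N}$ with the usual order, so its type is $\omega$. An element of $\mathcal{U}_1^{-}$ is indexed by $[-\infty]=-\mathbb{N}$ with the usual order on $\mathbb{Z}$, so its type is $\omega^{*}$.

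\emph{Inductive step.} Let $S=s(i_0,\dots,i_t)$ with $t\geq 1$ and $m_1=\dots=m_t=\varepsilon\infty$. Fixing the first index $i_0$ leaves a family of sequences indexed by $(i_1,\dots,i_t)\in[\varepsilon\infty]^{t}$. That family is itself an element of $\mathcal{U}_{2t-1}^{\varepsilon}$, so by the induction hypothesis its type is $\omega^{t}$ or $\omega^{*t}$. Under $\vartriangleleft$ the whole of $S$ is the concatenation of these blocks, ordered by $i_0$.
\begin{itemize}
\item If $i_0$ ranges over a finite set $[b]$ (degree $2t$), the type is $a\omega^{t}$, with $a$ the number of blocks. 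In the snake case it is the finite sum $\omega^{*t}+\dots+\omega^{*t}$, written $a\omega^{*t}$ in the paper's notation.
\item If $i_0$ ranges over $[\varepsilon\infty]$ (degree $2t+1$), the blocks are indexed by a set of type $\omega$ or $\omega^*$. This gives $\omega^{t}\cdot\omega=\omega^{t+1}$ or $\omega^{*t}\cdot\omega^{*}=(\omega^{t+1})^{*}=\omega^{*(t+1)}$.
\end{itemize}
For the starred case I will use that reversing the order of a lexicographic product reverses every coordinate, so $(\alpha\beta)^{*}=\alpha^{*}\beta^{*}$.

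\emph{Reconciling exponents.} I must match the exponent convention of the statement: $k+1$ for degrees $2k$ and $2k+1$. This is the main obstacle. The number of indices is $k+1$, but in degree $2k$ the first index is finite, which yields $a\omega^{k}$ by the count above. To recover the stated $a\omega^{k+1}$, the factor $\omega^{k+1}$ must be read as a product of $k+1$ factors with the finite first index absorbed into $a$. Equivalently, I will define $m_0\in[b]$ to contribute the coefficient $a$ and check the convention against the examples: the $3$-ladder in Example~\ref{Ex:3ladder} and $\mathcal{U}_2$, which has type $a\omega$. If the statement's exponent turns out to be off by one there, I will prove the corrected form and note the discrepancy.

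\emph{Mixed elements.} For $S=(S_t)_{t\in[m]}\in\mathcal{U}_\infty$, the address begins with $t$. By the definition of $\vartriangleleft$, every element of $S_t$ precedes every element of $S_{t'}$ whenever $t<t'$, and within each $S_t$ the order is the lexicographic order on the remaining coordinates. Hence $(\langle S\rangle,\prec_S)$ is the ordered sum over $t\in[m]$ of the $\ot(S_t)$. This is exactly $\sum_{t\in[m]}\ot(S_t)$ by the definition of sums of order types, with each summand computed by the uniform case.
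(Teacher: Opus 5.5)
Your route is the paper's route, organized more carefully. The paper also attaches one factor $\omega$ (resp.\ $\omega^{*}$) to each index ranging over $[\varepsilon\infty]$ and a finite coefficient to a finite first index, and it obtains the mixed case as the ordered sum over the first address coordinate $t$. Your induction via the lexicographic-product rule and the reversal identity $(\alpha\beta)^{*}=\alpha^{*}\beta^{*}$ makes that count rigorous. Your odd-degree and mixed-sum arguments are correct.

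Where you should stop hedging is the even degree: your count is right and the statement is wrong. An element of $\mathcal{U}_{2k}^{\varepsilon}$ has one finite index and exactly $k$ indices in $[\varepsilon\infty]$. Its type is therefore $a$ consecutive copies of $\omega^{k}$ (resp.\ of $(\omega^{k})^{*}$), not $a\omega^{k+1}$. The paper's own proof makes your count: it lists the infinite indices as $i_2,\dots,i_{k+1}$, which is $k$ of them. It then writes exponent $k+1$, which is an off-by-one slip, not a convention that can be read around.

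Your base case is already a counterexample rather than a ``degenerate reading''. For $k=0$, $a\omega^{k+1}=a\omega$ is not the type of a nonempty finite sequence for any $a$. The formula even assigns the two different types $a\omega$ and $a\omega^{*}$ to the single set $\mathcal{U}_0^{+}=\mathcal{U}_0^{-}$. The paper's later examples side with you: $\ot(E)=\omega\cdot 2$ for $E\in\mathcal{I}_2^{+}$, and $\ot(V)=\omega^{2}\cdot 2$ for $V\in\mathcal{I}_4^{+}$. So drop the proposed reinterpretation of $\omega^{k+1}$ and state and prove the corrected formula outright.

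Two smaller points. First, under the convention you state (outer index of type $\beta$, inner blocks of type $\alpha$, product $\alpha\beta$), $a$ blocks of type $\omega^{k}$ is $\omega^{k}\cdot a$. Write it that way, since as an ordinal product $a\cdot\omega^{k}=\omega^{k}$ for $k\geq 1$; the paper itself uses $\omega^{\varepsilon p}\cdot a$ in Theorem~\ref{order}. Second, the statement covers arbitrary $S\in\mathcal{U}_\infty$, which need not be injective. So compute the type on the index set $\bigcup_{t}\{t\}\times\Pi_{t,k_t}$ ordered by $\vartriangleleft$, not on $(\langle S\rangle,\prec_S)$, which is only defined for $S\in\mathcal{S}_\infty$.
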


\begin{proof} Let $S=\left( S_{t}\right) _{t\in \left[ m\right] }$. Each $S_{t}\in 
\mathcal{U}_{\infty }$, $t\in \left[ m\right] $, is an element of a uniform
set. If it is a finite sequence, then $S_{t}\in \mathcal{U}_{0}$ and naturally
$\ot\left( S\right) \in \mathbb{N}$. For each $t\in \left[ m\right] $%
, the sign and degree of $S_{t}$ indicate to which uniform set it
belongs and what is its order structure. Let $\delta =\delta \left(
S_{t}\right) $ and $\varepsilon =sgn\left( S_{t}\right) $, then $S_{t}\in 
\mathcal{U}_{\delta }^{\varepsilon }$. For even degree $\delta $, $k=\delta
/2$, from Definition \ref{def-unif} the sequence structure is $%
S_{t}=s\left( I_{k+1}\right) _{I_{k+1}\in \Pi _{k+1}}$, with $m_{1}\in \left[
a\right] $, $a\in \mathbb{N}$, $m_{2}=\cdots m_{k+1}=\varepsilon \infty $.
The first index $i_{1}$ ranges over the set $\left[ a\right] $, for some
natural $a\in \mathbb{N}$. The remaining indices $i_{2},...,i_{k+1}$
range over the set $\left[ \varepsilon \infty \right] $, and for each
index, there is a factor $\omega $ if the corresponding sign is $+1$ or a
factor $\omega ^{\ast }$ if the sign is $-1$. Therefore, the order type is $%
a\omega ^{k+1}$ or $a\omega ^{\ast (k+1)}$. For odd degree $\delta $, $%
k=\left( \delta -1\right) /2$ from  Definition \ref{def-unif}, the
sequence structure is $S_{t}=s\left( I_{k+1}\right) _{I_{k+1}\in \Pi _{k+1}}$%
, with $m_{1}=\cdots m_{k+1}=\varepsilon \infty $. All the indices $%
i_{1},...,i_{k+1}$ range over the set $\left[ \varepsilon \infty \right] $
and for each index, there is a factor $\omega $ if the corresponding sign is 
$+1$ or a factor $\omega ^{\ast }$ if the sign is $-1$. Therefore, the order
type is $\omega ^{k+1}$ or $\omega ^{\ast (k+1)}$. Finaly, the sequences $%
S_{t}$, $t\in \left[ m\right] $, are joined preserving the order, therefore
the order type of $S$ is the sum of the order types of $S_{t}$.
\end{proof}

\subsection{Representation by diagrams}\label{S:Diagram}

A visual representation of elements $S\in \mathcal{S}_{\infty }$, equivalently of decomposable shuffles, can be introduced through diagrams whose
elementary building blocks correspond to segments and benches. Each diagram encodes the partition $\mathcal{P}$ and the order $\prec$ of the associated shuffle concatenating these blocks according to the induced order. More precisely, the diagram associated with $S\in \mathcal{S}_{\infty}$, or
with a shuffle $(\mathcal{P},\prec)$, is defined as follows.
\bigskip

For each sequence $\left( s_{n}\right) _{n\in \left[ +\infty \right] }$ with
sign $+1$, ladder, draw the segment $\bullet\!\! -\!\!\circ $

For each sequence $\left( s_{n}\right) _{n\in \left[ -\infty \right] }$ with
sign $-1$, snake, draw the segment $\circ\!\! - \!\!\bullet $

For each sequence $\left( s_{0},...,s_{m}\right) $ with sign $0$, bench, draw the
segment $\bullet \!\!-\!\!\bullet $

\bigskip

The operations referred to in the proof of Theorem~\ref{canonical-prop}  have correspondence in the reduction of the diagrams according to the arithmetic of
order types. For example, the concatenation bench-bench, corresponds to $\bullet
\!\!-\!\!\bullet $  $\bullet \!\!-\!\!\bullet $ = $\bullet \!\!-\!\!\bullet 
$, bench-ladder, corresponds to $\bullet \!\!-\!\!\bullet $ \  $\bullet
\!\!-\!\!\circ $\ = $\bullet \!\!-\!\!\circ $, and finally snake-bench,
corresponds to $\circ \!\!-\!\!\bullet $  $\bullet \!\!-\!\!\bullet $ \ = $%
\circ \!\!-\!\!\bullet $. Other simplifications are for the concatenation
of $\bullet \!\!-\!\!\circ $ and $\circ \!\!-\!\!\bullet $ giving $\bullet
\!\!-\!\!\circ \!\!-\!\!\bullet $, and the concatenation of $\circ
\!\!-\!\!\bullet $ and $\bullet \!\!-\!\!\circ $ is represented by $\circ
\!\!-\!\!\circ $. See Figure \ref{fig:diag}
for a complete list of the concatenated elementary diagrams. 

\begin{figure}[htbp]
    \centering
    \includegraphics[width=0.5\textwidth]{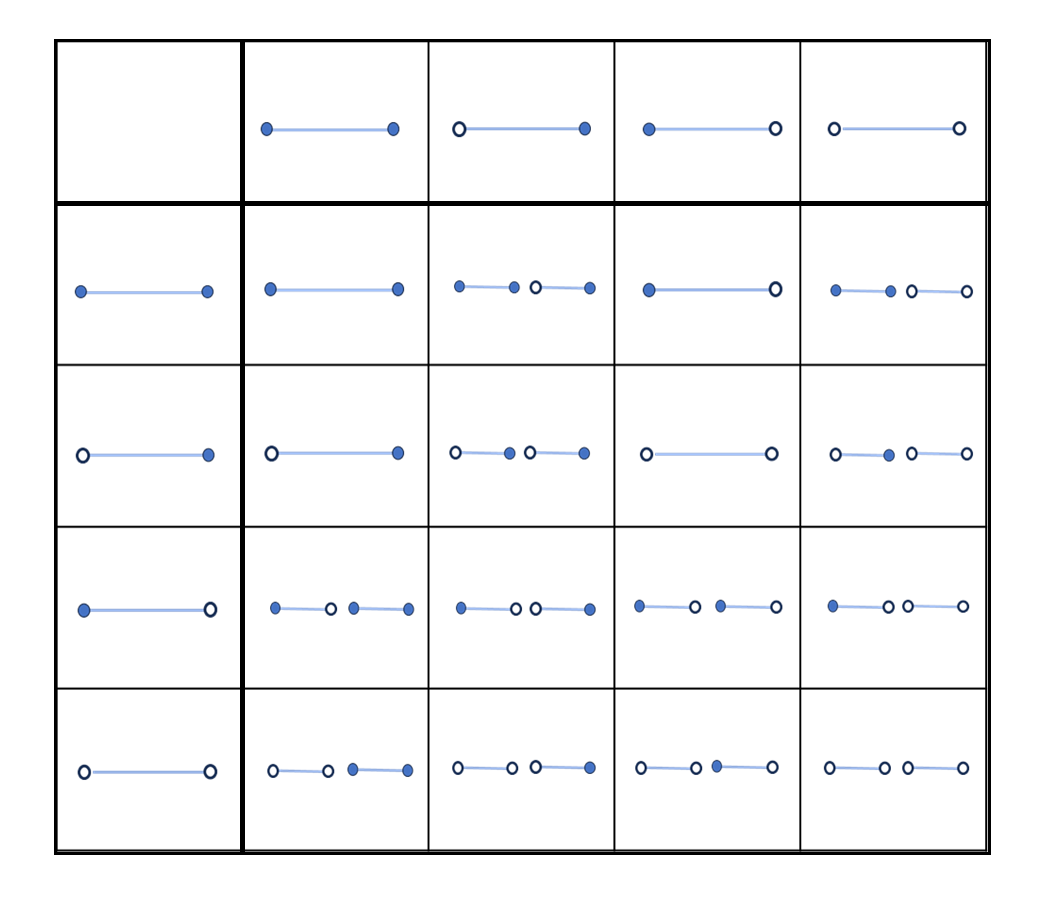}
    \caption{Table of for elements for the diagram: ladders, snakes, benches.}
    \label{fig:diag}
\end{figure}

\begin{proposition}
Two elements $S,\widetilde{S}$ of $\mathcal{S}_{\infty }$ are order equivalent if and only if their diagrams are equal. 
\end{proposition}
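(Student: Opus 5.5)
We prove both directions by passing through the pair $(\delta(S),\sgn(S))$. Order equivalence of $S,\widetilde{S}\in\mathcal{S}_{\infty}$ is by definition the equality of this pair, so it suffices to show that the diagram of $S$ is a function of $(\delta(S),\sgn(S))$ alone, and that conversely $(\delta(S),\sgn(S))$ can be read back from the diagram.

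\emph{Diagram determined by degree and sign.} By the proposition describing the shuffle associated with $\prec_S$, the parts of $\mathcal{P}_S$ are the last-layer sequences $s(t)(J,\cdot)$ with $t\in[m]$ and $J\in\Pi_{t,k_t-1}$. The diagram draws one elementary block for each such part, placed in the order induced by $\prec_S$, that is, in the lexicographic order of $(t,J)$. The type of the block ($\bullet\!\!-\!\!\circ$, $\circ\!\!-\!\!\bullet$ or $\bullet\!\!-\!\!\bullet$) is the sign of the part (Definition~\ref{sign}). This sign is $\sgn(S_t)$ when $\delta(S_t)>0$, and $0$ when $S_t\in\mathcal{U}_0$. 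The index ranges $[m]$ and $[m_{t,j}]$ that determine the arrangement of the blocks are fixed by $\delta(S)=(m,\delta(S_t)_t)$, together with the sign data and the finite lengths recorded in the degree. Hence $\delta(S)=\delta(\widetilde{S})$ and $\sgn(S)=\sgn(\widetilde{S})$ give the same blocks in the same arrangement, so the diagrams are equal. This is essentially the same address-level identification $\phi=\widetilde{S}\circ\ad_S$ used in the preceding theorem.

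\emph{Degree and sign recovered from the diagram.} Suppose the diagrams of $S$ and $\widetilde{S}$ coincide. We use the arrangement of blocks together with Theorem~\ref{order-type}, which computes $\ot$ blockwise as $\sum_t \ot(S_t)$ with $\ot(S_t)\in\{a\omega^{k+1},a\omega^{*(k+1)},\omega^{k+1},\omega^{*(k+1)}\}$. This shows that $(\mathbb{N},\prec_S)$ and $(\mathbb{N},\prec_{\widetilde{S}})$ are order isomorphic: match block to block, send each ladder to a ladder by the unique isomorphism of $\omega$, and treat snakes and benches the same way. The preceding theorem then yields $\delta(S)=\delta(\widetilde{S})$ and $\sgn(S)=\sgn(\widetilde{S})$, i.e.\ order equivalence.

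\emph{Main obstacle.} The difficulty is deciding what ``diagram equal'' means. If diagrams are taken after the reductions of Figure~\ref{fig:diag} (for instance bench+ladder $=$ ladder), the reduced diagram forgets information that $\delta(S)$ retains. I would therefore fix, as the working convention, that the diagram is the \emph{unreduced} ordered family of elementary blocks indexed by the addresses $(t,J)$, with bench lengths recorded. Under that convention both directions are bookkeeping. I would add a remark that under the reduced convention one obtains only equality of order types, and hence the preceding theorem must be read with canonical partitions (Theorem~\ref{canonical-prop}).
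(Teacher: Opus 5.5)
\textbf{There is a genuine gap.} You place the difficulty in the diagram convention, but the real obstruction is on the other side, in your forward direction: ``the finite lengths recorded in the degree.'' By definition, $\delta(S_t)$ is only the integer $d$ with $S_t\in\mathcal{U}_d$. So $\delta(S)=(m,\delta(S_t)_t)$ records neither the finite range $[m_{t,1}]$ of an even-degree component nor the length of a bench, and equal degree and sign do not fix the index set of blocks.

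Concretely, take $S=((0,2,4,\dots),(1,3,5,\dots))$ and $\widetilde{S}=((0,3,6,\dots),(1,4,7,\dots),(2,5,8,\dots))$, each a single component in $\mathcal{U}_2^{+}$. Both have $\delta=(1,(2))$ and the same sign. Yet their diagrams have two and three ladder blocks respectively, and their order types are $\omega\cdot 2\neq\omega\cdot 3$; no reduction rule merges two ladders. With your convention of recording bench lengths, the claim fails even earlier: $((0,1),(2,3,4,\dots))$ and $((0,1,2),(3,4,5,\dots))$ both have $\delta=(2,(0,1))$ but different diagrams.

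So this direction is not bookkeeping. With $\delta$ as defined it is false. Appealing to $\phi=\widetilde{S}\circ\ad_S$ only imports the same unjustified step (``the structure of both is equal'') from the preceding theorem, and the paper's ``obvious from the construction'' adds nothing beyond it. To get a true statement you must enlarge $\delta$ to record the ranges $m_{t,1}$ (and bench lengths, if the diagram shows them). After that change, this direction really is bookkeeping.

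Your backward direction has a separate flaw. You reduce equal diagrams to an order isomorphism and then invoke the preceding theorem, but order isomorphism does not determine $(\delta,\sgn)$. The paper's own Remark gives $(1,0,3,2,5,4,\dots)$ and $((1,0,3),(2,5,4,\dots))$, which induce the \emph{same} order while $\delta=(1,(1))\neq(2,(0,1))$. Once you pass to isomorphism, the data you need is gone, so that half of the preceding theorem cannot be used.

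Instead, read $(\delta,\sgn)$ directly off the address-indexed diagram:
\begin{itemize}
\item $m$ is the range of $t$.
\item For each $t$, the length $k_t-1$ of $J$, together with whether $[m_{t,1}]$ is finite, gives $\delta(S_t)$. When $k_t=1$, the block type separates $d=0$ from $d=1$.
\item The block types give $\sgn(S_t)$.
\end{itemize}
This needs the $(t,J)$ labels, not just the string of blocks. Indeed, $((0,2,4,\dots),(1,3,5,\dots))$ read as one component of $\mathcal{U}_2^{+}$ or as two components of $\mathcal{U}_1^{+}$ draws as $\bullet\!\!-\!\!\circ\ \bullet\!\!-\!\!\circ$ both times, although $\delta=(1,(2))\neq(2,(1,1))$.
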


\begin{proof}
Obvious from the construction mentioned above.
\end{proof}

\begin{exmp}
   Consider again  Example~\ref{example-ad}. 
   \begin{itemize}
   \item[]For (a)  the diagram for $S$ is $%
\bullet \!\!-\!\!\circ $ $\bullet \!\!-\!\!\circ $. 
\item[] For (b) the diagram is $\bullet
\!\!-\!\!\circ $ $\bullet \!\!-\!\!\circ \bullet \!\!-\!\circ $.
\item[] And for (c) 
$\bullet \!\!-\!\!\circ \cdots \ \circ \!\!-\!\!\bullet \circ
\!\!-\!\!\bullet \cdots \circ \!\!-\!\!\bullet .$
\end{itemize}
\end{exmp}

\section{Involution}\label{S:Involution}

In this section, we introduce an involution defined on uniform sets and extended to mixed sets, providing a natural duality between ladders and snakes while preserving degree and corresponding to order reversal at the level of shuffles.

\begin{definition}\label{D:involution}
Let 
\begin{equation*}
\left( \cdot \right) ^{\ast }:\mathcal{U}\rightarrow \mathcal{U}%
\end{equation*}%
be the map defined as follows. For each $d\in \mathbb{N}$,   $S=s\left( I\right) _{I\in \Pi _{t}}\in \mathcal{U}_{d}$, we define $S^*=\left( s\left( I\right) _{I\in \Pi _{t}}\right) ^{\ast
}$ by

 $$\left( s\left( I\right) _{I\in \Pi _{t}}\right) ^{\ast
}:=s\left( -i_{0},..,-i_{t}\right) _{i_{0}\in \left[ -m_{0}\right]
 ,...,i_{t}\in \left[ -m_{t}\right] }$$

\end{definition}

\begin{proposition}
\label{P:involution} The map $\left( \cdot \right) ^{\ast }:\mathcal{U}%
\rightarrow \mathcal{U}$ is an involution. Moreover, $%
\left( \mathcal{U}_{d}^{\varepsilon }\right) ^{\ast }=\mathcal{U}%
_{d}^{-\varepsilon }$, for any $d\in \mathbb{N}$ and $\varepsilon \in
\left\{ -,+\right\} $.
\end{proposition}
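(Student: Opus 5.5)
The argument works at the level of index sets and then passes to elements. The key observation is that the map $i\mapsto -i$ is a bijection $[m]\to[-m]$ for every admissible domain: it sends $[+\infty]=\mathbb{N}$ onto $[-\infty]=-\mathbb{N}$, and $[b]=\{0,\dots,b-1\}$ onto $[-b]$. The notation gives $[-b]=\{-b+1,\dots,0\}$, which is exactly the negatives of $[b]$ up to the shift convention. I will check carefully that the indexing conventions (notably $[-m]$ versus $\{-(m-1),\dots,0\}$) match, and if not, read $[-m]$ as $-[m]$. This gives a bijection $\Pi_t\to\Pi_t^*:=[-m_0]\times\cdots\times[-m_t]$, $I\mapsto -I$, and $S^*$ is simply $s$ composed with this reflection, reindexed. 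In particular $\langle S^*\rangle=\langle S\rangle$ and injectivity is preserved.

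\textbf{Step 1 ($S^*\in\mathcal{U}$ and $(\mathcal{U}_d^\varepsilon)^*\subseteq\mathcal{U}_d^{-\varepsilon}$).} Take $S\in\mathcal{U}_d^\varepsilon$ and split according to the parity of $d$, following Definition~\ref{def-unif}.

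For $d=2t+1$, all domains are $\varepsilon\infty$. After reflection, all domains are $-\varepsilon\infty$, so $S^*\in\mathcal{U}_{2t+1}^{-\varepsilon}$ with the same number of indices, hence the same degree.

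For $d=2t$, the outer domain $[b]$ becomes the finite set $[-b]$, and I re-index it to $[b]$ by the order-preserving shift $j\mapsto j+b-1$; this shift is the only normalization needed. The remaining domains flip from $\varepsilon\infty$ to $-\varepsilon\infty$, so $S^*\in\mathcal{U}_{2t}^{-\varepsilon}$. For $d=0$ the sign is $0$ and $S^*$ is the reversed finite bench, which stays in $\mathcal{U}_0$.

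\textbf{Step 2 (involution).} Applying the definition twice uses $-(-i_j)=i_j$ and $-(-m_j)=m_j$, so $(S^*)^*=s(i_0,\dots,i_t)_{I\in\Pi_t}=S$. The shift from Step 1 is inverted on the second application, since reversing a finite sequence twice gives the identity. Hence $(\cdot)^{**}=\mathrm{id}$, and $(\cdot)^*$ is a bijection of $\mathcal{U}$.

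\textbf{Step 3 (equality).} Equality $(\mathcal{U}_d^\varepsilon)^*=\mathcal{U}_d^{-\varepsilon}$ follows by combining Step 1 for both signs with Step 2: given $T\in\mathcal{U}_d^{-\varepsilon}$, we have $T^*\in\mathcal{U}_d^{\varepsilon}$ and $T=(T^*)^*$.

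The main obstacle is purely notational. The paper's sets $[-m]$ for finite $m$, and the conventions for finite outer indices, must be matched so that $S^*$ literally lies in the set written in Definition~\ref{def-unif}. I would resolve this first by fixing the identification of finite index sets up to order-preserving shift, after which everything is routine.
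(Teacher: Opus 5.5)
Your proposal is correct and follows the same route as the paper: negate every index so that each infinite domain flips from $[\varepsilon\infty]$ to $[-\varepsilon\infty]$ while the number of indices, and hence the degree, is preserved, and then observe that applying the map twice is the identity. You are more careful than the paper in two places: the paper spells out neither the reverse inclusion $\mathcal{U}_d^{-\varepsilon}\subseteq(\mathcal{U}_d^{\varepsilon})^*$ nor the fact that the outer finite domain $[-m_0]$ is not literally of the form $[m_0]$ required by Definition~\ref{def-unif}. Note that $[-b]=-[b]$ holds exactly, so your shift is needed only for that normalization, not to reconcile the conventions.
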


\begin{proof}
The map $\left( \cdot \right) ^{\ast }$ changes the signs in the indices,
 therefore sends $\mathcal{U}%
_{d}^{\varepsilon }$ to $\mathcal{U}%
_{d}^{-\varepsilon }$ and preserves the degree. The composition of $\left(\cdot \right) ^{\ast }$ with itself is naturally the identity map.
\end{proof}

\begin{exmp}
Consider the 3-ladder 
$$S=\left( \left( 1,4,7,...\right) ,\left( 2,5,8,...\right) ,\left(
3,6,9,...\right) \right) \in \mathcal{U}_{2}^{+}$$ introduced in Example~\ref{Ex:3ladder}.
The involution, $S^{\ast}$,  of $S$ is the corresponding 3-snake $$S^{\ast }=\left( \left( ...,9,6,3\right) ,\left(
...,8,5,2\right) ,\left( ...,7,4,1\right) \right) \in \mathcal{U}_{2}^{-}.$$ In
this case $S^{\ast }=\left( s^{\ast }\left( j_{0},j_{1}\right) _{j_{0}\in %
\left[ 2\right] ,j_{1}\in \left[ -\infty \right] }\right) $ where%
\begin{equation*}
s^{\ast }\left( j_{0},j_{1}\right) =s\left( -j_{0},-j_{1}\right) =-3
j_{1}-j_0 +1,j_{0}\in \left[ -2\right] ,j_{1}\in \left[ -\infty %
\right].
\end{equation*}%
For example, $s^{\ast }\left( 0,0\right) =s\left( 0,0\right) =1$, which in this case becomes the maximal element. Following the decreasing order $s^{\ast }\left( 0,-1\right) =s\left( 0,1\right) =4$. In the next segment, (for $j_0=-1$, in decreasing order) $s^{\ast }\left( -1,0\right) =s\left( 1,0\right) =2$, $s^{\ast }\left( -1,-1\right) =s\left( 1,1\right) =5$. For the next segment we should consider $j_0=-2$.
\end{exmp}

The involution $\left( {\cdot }\right) ^{\ast }$ also extends component-wise to $\mathcal{U}_{\infty }$  with 
\begin{equation*}
\left( \cdot \right) ^{\ast }:\mathcal{U}_{\infty }\rightarrow \mathcal{U}_{\infty }
\end{equation*}%
\begin{equation*}
\left( S_{1},S_{2},...,S_{k}\right) ^{\ast }=\left( S_{1}^{\ast
},S_{2}^{\ast },...,S_{k}^{\ast }\right), 
\end{equation*}%
\begin{equation*}
\left( S_{1},S_{2},\ldots ,S_{k},\ldots \right) ^{\ast }=\left( S_{1}^{\ast
},S_{2}^{\ast },\ldots ,S_{k}^{\ast },\ldots \right). 
\end{equation*}

Given $S\in \mathcal{S}_{\infty }$, the construction above yields a total order
$\prec_{S}$ on $\mathbb{N}$. The involution $S^{*}$ induces the reverse order.

\begin{proposition} Let $S\in \mathcal{S}_{\infty }$. Then the order $\prec _{S^{\ast
}}$, associated with the involution of $S$, satisfies $$\forall {x,y\in \mathbb{N}},x\prec _{S}y\Leftrightarrow
y\prec _{S^{\ast }}x.$$
\end{proposition}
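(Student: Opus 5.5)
The plan is to compare the address maps $\ad_S$ and $\ad_{S^*}$ directly and show that one is the coordinatewise negative of the other. Negation of all coordinates reverses the lexicographic order $\vartriangleleft$ on any family of tuples in which no tuple is a proper prefix of another. So the proposition should reduce to two facts: the address identity, and the absence of the prefix case (2) among addresses coming from a single $S\in\mathcal{S}_\infty$.

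\textbf{Step 1 (uniform case).} Take $S=s(I)_{I\in\Pi_t}\in\mathcal{U}_d$. By Definition~\ref{D:involution}, $S^*(j_0,\dots,j_t)=s(-j_0,\dots,-j_t)$ with $j_r\in[-m_r]$. Hence the number $x=s(i_0,\dots,i_t)$ has address $(i_0,\dots,i_t)$ in $S$ and address $(-i_0,\dots,-i_t)$ in $S^*$. By Proposition~\ref{P:involution}, $S^*$ is again a uniform element with the same underlying set $\langle S^*\rangle=\langle S\rangle$, so it is injective whenever $S$ is.

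\textbf{Step 2 (mixed case and the outer index).} This is the step I expect to be the real obstacle. With the componentwise extension written above, $(S_t)_{t\in[m]}^*=(S_t^*)_{t\in[m]}$, the outer index $t$ is \emph{not} negated. Then for $x\in\langle S_0\rangle$ and $y\in\langle S_1\rangle$ we get $x\prec_S y$ and also $x\prec_{S^*}y$, which contradicts the claim. I would therefore read the extension, consistently with Definition~\ref{D:involution}, as also reversing the outer index. Concretely, the $t$-th component becomes $S_t^*$, indexed by $-t\in[-m]$, so the outer sequence is listed in reverse order (for $m=+\infty$ this gives an $\omega^*$-indexed family, which matches $\sgn(m)$ flipping). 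With this reading, $\ad_{S^*}(x)=-\ad_S(x)$ for every $x\in\mathbb{N}$. This also shows that $S^*\in\mathcal{S}_\infty$, because injectivity and $\langle S^*\rangle=\mathbb{N}$ are inherited from $S$. I would state this interpretation explicitly in the proof, and remark that without it the statement holds only inside each uniform component.

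\textbf{Step 3 (lexicographic reversal).} Let $x\neq y$, with $\ad_S(x)=(t,I)$ and $\ad_S(y)=(p,J)$. If $t\neq p$, the two tuples first differ at coordinate $0$. If $t=p$, both tuples have the same length $1+k_t$, and by injectivity they are distinct, so they first differ at some coordinate $r$. In either case neither tuple is a proper prefix of the other, so $\ad_S(x)\vartriangleleft\ad_S(y)$ holds exactly when clause (1) of the definition of $\vartriangleleft$ holds at the first differing coordinate $r$, namely $x_r<y_r$. Negation keeps the same first differing coordinate and turns $x_r<y_r$ into $-x_r>-y_r$. Hence $\ad_S(x)\vartriangleleft\ad_S(y)$ if and only if $-\ad_S(y)\vartriangleleft-\ad_S(x)$. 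By Step 2 the latter is $\ad_{S^*}(y)\vartriangleleft\ad_{S^*}(x)$, that is, $y\prec_{S^*}x$, which proves the proposition.
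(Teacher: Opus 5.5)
Your proof is correct and is essentially the paper's argument: the paper also takes $\ad_{S^*}(x)$ to be the coordinatewise negative of $\ad_S(x)$, writing $x_p^*=-x_p$ for every coordinate with the outer index included, and observes that the first differing coordinate is unchanged while the inequality there flips. You add two points the paper omits: your Step~2 correctly notes that the extension of ${}^*$ to $\mathcal{U}_\infty$ must also reverse the outer index (the literal componentwise extension $(S_t^*)_{t\in[m]}$ makes the statement fail as soon as $S$ has two nonempty components, is inconsistent with the paper's own \v{S}arkovski\u{i} example, and for $m=+\infty$ your reading requires admitting $[-\infty]$-indexed outer sequences), and your Step~3 rules out the prefix clause of $\vartriangleleft$, which the paper passes over.
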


\begin{proof} Let $x,y\in \mathbb{N}$, with $\ad_{S}\left( x\right) =\left(
x_{1},x_{2},...,x_{t}\right) $ and $\ad_{S}\left( y\right) =\left(
y_{1},y_{2},...,y_{r}\right) $ and suppose that $x\prec _{S}y$. Let $p$ be
the smallest so that $x_{p}\neq y_{p}$, then $x_{p}<y_{p}$ (lexicographic
order). Let $\ad_{S^{\ast }}\left( x\right) =\left( x_{1}^{\ast },x_{2}^{\ast
},...,x_{t}^{\ast }\right) $ and $\ad_{S^{\ast }}\left( y\right) =\left(
y_{1}^{\ast },y_{2}^{\ast },...,y_{r}^{\ast }\right) $, then $p$ is the
smallest so that $x_{p}^{\ast }\neq y_{p}^{\ast }$ and $x_{p}^{\ast }=-x_{p}$, $%
y_{p}^{\ast }=-y_{p}$, thus, $y_{p}^{\ast }<x_{p}^{\ast }$ proving the result. 
The reverse implication is obtained by applying the above argument to $S^{\ast}$, since, $(S^{\ast})^{\ast}=S$. 
\end{proof}

\begin{exmp}
Let $S=\left( \left( 1,4,7,10...\right) ,\left( 2,5,8,11,14...\right) ,\left(
3,6,9,12,...\right) \right) \in \mathcal{U}_{2}$. In this case $S=\left(
s\left( i_{0},i_{1}\right) _{i_{0}+1\in \left[ 2\right] ,i_{1}\in \left[
+\infty \right] }\right) $ with%
\begin{equation*}
s\left( i_{0},i_{1}\right) =3 i_{1} +i_{0},i_{0}\in \left[ 2%
\right] ,i_{1}\in \left[ +\infty \right]
\end{equation*}%
For example $s\left( 1,3\right) =11$, $s\left( 1,4\right) =14$, or $s\left(
2,1\right) =6$. So, $11\prec _{S}14\prec _{S}6$.

The involution is $S^{\ast }=\left( \left( ...,12,9,6,3\right) ,\left(
...14,11,8,5,2\right) ,\left( ...,10,7,4,1\right) \right) \in \mathcal{U}_{2}$. In
this case $S^{\ast }=\left( s^{\ast }\left( j_{0},j_{1}\right) _{j_{0}\in %
\left[ -2\right] ,j_{1}\in \left[ -\infty \right] }\right) $ with%
\begin{equation*}
s^{\ast }\left( j_{0},j_{1}\right) =s\left( -j_{0},-j_{1}\right) =-3
j_{1} -j_{0}+1,j_{0}\in \left[ -2\right] ,i_{1}\in \left[ -\infty %
\right]
\end{equation*}%
Moreover, $11=s\left( 1,3\right) =s^{\ast }\left( -1,-3\right) $, $%
14=s\left( 1,4\right) =s^{\ast }\left( -1,-4\right) $, and
$6=s\left(
2,1\right) =s^{\ast }\left( -2,-1\right) $. That is, $6\prec _{S^{\ast
}}14\prec _{S^{\ast }}11$. 

The diagram of $S$ is  $\bullet
\!\!-\!\!\circ $ $\bullet \!\!-\!\!\circ \bullet \!\!-\!\circ $, and the diagram of $S^*$ is naturally the reverse $\circ
\!\!-\!\!\bullet $ $\circ \!\!-\!\!\bullet $ $\circ \!\!-\!\!\bullet $.
\end{exmp}

\begin{exmp}
 Both the \v{S}arkovski\u{i} order as presented in (\ref{E:Shar_order}) and its reverse given in Example~\ref{example-ad} (c), are used explicitly in dynamical systems. One is in fact the involution order of the other.  
The diagram for the order  in (\ref{E:Shar_order}) is
\begin{equation*}
\bullet \!\!-\!\!\circ \bullet \!\!-\!\!\circ \ \cdots \bullet
\!\!-\!\!\circ \ \cdots \circ \!\!-\!\!\bullet 
\end{equation*}
\end{exmp}

\section{On the composition of shuffles}\label{S:Composition}

In this section, we introduce a notion of composition for decomposable shuffles that have no benches and share the same sign. This construction is naturally formulated at the level of their representations in uniform sets and corresponds to a hierarchical substitution of one shuffle into another.

Consider the sets%
\begin{equation*}
\begin{split}
\mathcal{V}&:=\bigcup_{d\geq 1}\mathcal{U}_{d},\\
\end{split}
\end{equation*}

\begin{equation*}
\mathcal{I}:=\left\{ S\in \mathcal{V}:\left\langle S\right\rangle =\mathbb{N}%
\text{, }S\text{ injective}\right\},   \label{I}
\end{equation*}


\begin{equation*}
\mathcal{I}_{d }^{\varepsilon}:=\mathcal{I}\cap \mathcal{U}_{d }^{\varepsilon}. 
\end{equation*}

Elements of $\mathcal{I}$ represent uniform decomposable shuffles without
benches.

\begin{definition}
Let $S \in \mathcal{I}$, $U \in \mathcal{I}$, with $$S=s\left( i_{0},i_{1},...,i_{k}\right) _{i_{0}\in \left[ m_{0}\right]
,i_{1}\in \left[ m_{1}\right] ,...,i_{k}\in \left[ m_{k}\right] },
$$ and $$U=u\left( j_{0},j_{1},...,j_{q}\right) _{j_{0}\in \left[ w_{0}\right]
,j_{1}\in \left[ w_{1}\right] ,...,j_{q}\in \left[ w_{q}\right] }$$%
 be elements such that $\sgn\left( S\right) =\sgn\left( U\right)=\varepsilon $. The \emph{composition} $S \circ U=R$ is the element of $\mathcal{I}$ defined by
\begin{equation*}
r\left( i_{0},i_{1},...,i_{k_{t}-1},j_{0},j_{1},...,j_{q}\right) =s\left(
i_{0},i_{1},...,i_{k-1},\varepsilon u\left( j_{0},j_{1},...,j_{q}\right) \right) ,
\end{equation*}

where 
\begin{eqnarray*}
i_{0} &\in &\left[ m_{0}\right] ,i_{1}\in \left[ m_{1}\right] ,...,i_{k}\in %
\left[ m_{k}\right] , \\
j_{0} &\in &\left[ w_{0}\right] ,j_{1}\in \left[ w_{1}\right] ,...,j_{q}\in %
\left[ w_{q}\right].
\end{eqnarray*}

Informally, the composition $R=S\circ U$ is obtained by replacing each segment
of $S$, that is, each subsequence indexed by the last coordinate
$i_{k}\in[\pm\infty]$, with a copy of the entire structure $U$, preserving the
relative order. In particular,

$$\sgn(R)=\sgn(S)=\sgn(U).$$
\end{definition}

\begin{exmp}\label{Ex:represent}
\begin{enumerate}
\item[$(a)$]  Let 
\begin{equation*}
S=\left( 1,0,3,2,5,4,7,6,...\right) \in \mathcal{I}_{1}^{+},\ 
\end{equation*}%
\begin{equation*}
E=\left( \left( 0,2,4,6,8,10,...\right) ,\left( 1,3,5,7,9,...\right) \right)
\in \mathcal{I}_{2}^{+}.
\end{equation*}%
Using the representations
\begin{equation*}
S=\left( s\left( i_{0}\right) \right) _{i_{1}\in \left[ +\infty \right] }\in 
\mathcal{I}_{1}^{+},
\end{equation*}%
\begin{equation*}
E=\left( e\left( i_{0},i_{1}\right) \right) _{i_{0}\in \left[ 1\right]
,i_{1}\in \left[ +\infty \right] }\in \mathcal{I}_{2}^{+},
\end{equation*}%
with $e\left( i_{0},i_{1}\right) =2i_{1}+i_{0}$, $i_{0}\in \left[ 1\right]
,i_{1}\in \left[ +\infty \right] $, the composition $R=S\circ E=\left( r\left( i_{0},i_{1}\right) \right)
_{i_{0}\in \left[ 1\right] ,i_{1}\in \left[ +\infty \right] }$ is given by 
\begin{equation*}
r\left( i_{0},i_{1}\right) =s\left( e\left( i_{0},i_{1}\right) \right)
,i_{0}\in \left[ 1\right] ,i_{1}\in \left[ +\infty \right],
\end{equation*}in particular, 
\begin{equation*}
r\left( 0,0\right) =s\left( e\left( 0,0\right) \right) =s\left( 0\right)
=1; \quad r\left( 0,1\right) =s\left( e\left( 0,1\right) \right) =s\left( 2\right)
=3,...
\end{equation*}%
\begin{equation*}
r\left( 1,0\right) =s\left( e\left( 1,0\right) \right) =s\left( 1\right)
=0; \quad r\left( 1,1\right) =s\left( e\left( 1,1\right) \right) =s\left( 3\right)
=2,...
\end{equation*}%
Therefore, by direct computation we have
\begin{equation*}
R=\left( \left( 1,3,5,7,...\right) ,\left( 0,2,4,6,8,...\right) \right) \in 
\mathcal{I}_{2}^{+}.
\end{equation*}%
Thus $\ot\left( E\right) =\omega \cdot 2$, $\ot\left(
S\right) =\omega $ and $\ot\left( R\right) =\omega \cdot 2$.

On the other hand, $V=E\circ S=\left( v\left( i_{0},i_{1}\right) \right)
_{i_{0}\in \left[ 1\right] ,i_{1}\in \left[ +\infty \right] }$ with 
\begin{equation*}
v\left( i_{0},i_{1}\right) =e\left( i_{0},s\left( i_{1}\right) \right)
,i_{0}\in \left[ 1\right] ,i_{1}\in \left[ +\infty \right]
\end{equation*}%
that is,%
\begin{equation*}
V=\left( \left( 0,2,6,8,...\right) ,\left( 3,1,7,5,...\right) \right) \in 
\mathcal{I}_{2}^{+}.
\end{equation*}%
The order type is $\ot\left( V\right) =\omega \cdot 2$.

\item[$(b)$] Let $E$ be the same as in example $(a)$ above, 
\begin{equation*}
E=\left( \left( 0,2,4,6,8,10,...\right) ,\left( 1,3,5,7,9,...\right) \right)
\in \mathcal{I}_{2}^{+}.
\end{equation*}%
Consider the representation%
\begin{equation*}
E=\left( e\left( i_{0},i_{1}\right) \right) _{i_{0}\in \left[ 1\right]
,i_{1}\in \left[ +\infty \right] }\in \mathcal{I}_{2}^{+},
\end{equation*}%
with $e\left( i_{0},i_{1}\right) =2i_{1}+i_{0}$, $i_{0}\in \left[ 1\right]
,i_{1}\in \left[ +\infty \right] $.

Let $S=\left( s\left( i_{0},i_{1}\right) \right) _{i_{0}\in \left[ +\infty %
\right] ,i_{0}\in \left[ +\infty \right] }\in \mathcal{I}_{3}^{+}$ a variant
of the \v{S}arkovski\u{i} example above, so that the sign is positive, with
\begin{equation*}
s\left( 0,0\right) =0
\end{equation*}
\begin{equation*}
s\left( 0,i_{1}\right) =2^{i_{1}-1},i_{1}\in \left[1, +\infty \right]
\end{equation*}%
\begin{equation*}
s\left( i_{0},i_{1}\right) =\left( 2i_{1}+1\right) 2^{i_{0}-1},i_{0}\in 
\left[ 1,+\infty \right] ,i_{1}\in \left[ +\infty \right]
\end{equation*}%
Explicitly, 
\begin{eqnarray*}
S &=&\left(\left(0,1,2,2^{2},2^{3},...\right) ,(3,5,7,9,11...),\right. \\
&&\left( 3\times 2,5\times 2,7\times 2,9\times 2,11\times 2,...\right) , \\
&&\left. ...\left( 3\times 2^{k},5\times 2^{k},7\times 2^{k},9\times 2^{k},11\times
2^{k},,,,\right) ,...\right)
\end{eqnarray*}%
Let%
\begin{equation*}
R=S\circ E=r\left( i_{0},j_{0},j_{1}\right) =s\left( i_{0},e\left(
j_{0},j_{1}\right) \right) \in \mathcal{I}_{3}^{+},
\end{equation*}%
\begin{equation*}
i_{0}\in \left[ +\infty \right] ,j_{0}\in \left[ 1\right] ,j_{1}\in \left[
+\infty \right]
\end{equation*}%
that is, 

\begin{equation*}
R=\left(\left(0,2,2^{3},2^{5},...\right) \left( 1,2^{2},2^{4}...\right)
,(5,9,...),(3,7,...),...\right)\in \mathcal{I}_{3}^{+}.
\end{equation*}
We have, for instance,
\begin{equation*}
\begin{split}
&r\left( 0,0,0\right) =s(0,e(0,0))=s(0,0)=0,\\
&r\left( 0,0,1\right)=s(0,e(0,1))=s(0,2)=2,\\
&...
\end{split}
\end{equation*}%

The order type: $\ot\left( E\right) =\omega \cdot 2$, $\ot\left(
S\right) =\omega ^{2}$ and $\ot\left( R\right) =\omega ^{2}$, since $%
R\in \mathcal{I}_{3}^{+}$.

Now, let $V=E\circ S$,%
\begin{equation*}
V=E\circ S=v\left( i_{0},j_{0},j_{1}\right) =e\left( i_{0},s\left(
j_{0},j_{1}\right) \right) \in \mathcal{I}_{4}^{+},
\end{equation*}%
\begin{equation*}
i_{0}\in \left[ 1\right] ,j_{0}\in \left[ +\infty \right] ,j_{1}\in \left[
+\infty \right],
\end{equation*}%
that is, 
\begin{equation*}
V=\left( \left(0,2,2^2,2^3,...,\right) ,\left(6,10,14,16,...\right) ,...\right)
\left( (1,3,5,9,...)(7,11,15,19,...)...\right),
\end{equation*}%

which is in $\mathcal{I}_{4}^{+}$. Again, we present some calculations 

\begin{equation*}
\begin{split}
&v\left( 0,0,0\right) =e\left( 0,s\left(0,0\right) \right) =e\left(
0,0\right) =0,\\
&v\left( 1,0,0\right) =e\left( 1,s\left( 0,0\right) \right)
=e\left( 2,1\right) =2.
\end{split}
\end{equation*}%

The order type: $\ot\left( V\right) =\omega ^{2}\cdot2$, with $V\in 
\mathcal{I}_{4}^{+}$.
\end{enumerate}
\end{exmp}

Example~\ref{Ex:represent} shows that the composition operation is not commutative and that,
while composition with elements of the same degree may preserve order type,
composition with elements of different degrees can increase the order type in
a controlled manner. In the next result, we see how the order type changes under composition. 

\begin{theorem}
\label{order}
Let $S,U\in \mathcal{I}$, with $sgn(S)=sgn(U)=\varepsilon $, and $%
R=S\circ U$. If \ $\ot(S)=\omega ^{\varepsilon p}\cdot a$, $\ot%
(U)=\omega ^{\varepsilon q}\cdot b$, with $p,q,a,b\in \mathbb{N}$, then $\ot%
(R)=\omega ^{\varepsilon (p-1+q)}\cdot b$.
\end{theorem}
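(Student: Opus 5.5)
The plan is to prove the statement by reading off the degree and sign of $R=S\circ U$ from its index structure, and then applying Theorem~\ref{order-type}. Theorem~\ref{order-type} already converts degree and sign of a uniform element into its order type. So the whole task reduces to counting the coordinates of $R$, separating the finite ones from the infinite ones, and checking that $R$ is a genuine uniform element, i.e.\ $R\in\mathcal{I}$, as the definition of composition asserts.

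\emph{Step 1: matching the hypotheses with Definition~\ref{def-unif}.} Write $\ot(S)=\omega^{\varepsilon p}\cdot a$ and $\ot(U)=\omega^{\varepsilon q}\cdot b$. By Theorem~\ref{order-type}, this means the following. $S$ is indexed by $p$ coordinates ranging over $[\varepsilon\infty]$, preceded (in the even-degree case) by one finite outermost coordinate ranging over a set of size $a$, with $a=1$ in the odd-degree case. Likewise $U$ has $q$ coordinates over $[\varepsilon\infty]$ and possibly one finite outermost coordinate $j_0$ of size $b$. Since $S\in\mathcal{I}\subset\mathcal{V}$ has degree $\geq 1$, its last coordinate $i_k$ is infinite, so $p\ge 1$.

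\emph{Step 2: the index set of $R$.} By definition, $r(i_0,\dots,i_{k-1},j_0,\dots,j_q)=s(i_0,\dots,i_{k-1},\varepsilon u(j_0,\dots,j_q))$. Hence the index set of $R$ is $[m_0]\times\cdots\times[m_{k-1}]\times[w_0]\times\cdots\times[w_q]$. Composition removes exactly one infinite coordinate of $S$, namely $i_k$, and appends all coordinates of $U$. Since $u$ is a bijection onto $\mathbb{N}$ and $i_k\mapsto s(I,i_k)$ runs over each segment exactly once, the same two facts show that $R$ is injective with $\langle R\rangle=\mathbb{N}$.

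Counting coordinates, $R$ has $(p-1)+q$ coordinates ranging over $[\varepsilon\infty]$. Its finite coordinates are $j_0$ (size $b$), coming from $U$, together with the finite outermost coordinate of $S$, if $S$ has one.

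\emph{Step 3 (main obstacle): uniformity of $R$.} Definition~\ref{def-unif} permits at most one finite coordinate, and only in the outermost position. So for $R$ to lie in $\mathcal{I}$, as the definition of composition requires, the finite data must reduce to a single outermost factor. I would argue this as follows.

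First, if $p\ge 2$, the coordinate $i_0$ of $S$ must be infinite; otherwise $R$ would contain a finite coordinate followed later by another finite coordinate $j_0$, which is not a uniform shape. Thus $S$ has odd degree and $a=1$.

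Second, if $p=1$ and $S$ has a finite outermost coordinate, then $k=1$, and that outermost coordinate must also be absorbed for $R$ to be uniform. This case is the delicate point. I would first try to show that the requirement $R\in\mathcal{I}$ forces $S$ to have a single uniform segment, as in Example~\ref{Ex:represent}\,(a), where $S\in\mathcal{I}_1^+$.

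In either case, the only surviving finite coordinate of $R$ is $j_0$, of size $b$. It is then reindexed as the outermost coordinate, in accordance with the paper's convention of listing the finite index first in $\mathcal{U}_{2t}$.

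\emph{Step 4: conclusion.} After Step~3, $R\in\mathcal{U}^{\varepsilon}_{d}$ with one finite coordinate of size $b$ and $p-1+q$ coordinates over $[\varepsilon\infty]$. Theorem~\ref{order-type} then gives $\ot(R)=\omega^{\varepsilon(p-1+q)}\cdot b$. As a sanity check, Example~\ref{Ex:represent} agrees with this formula in the computed cases:
\begin{itemize}
\item $S\circ E$ in (a): $p=1$, $q=1$, $b=2$, giving $\omega\cdot 2$;
\item $E\circ S$ in (b): here $S$ plays the role of $U$ with no finite index, so $b=1$, and $p=1$, $q=2$, giving $\omega^2$.
\end{itemize}
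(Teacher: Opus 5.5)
The gap is Step~3, and it cannot be closed: the configurations you try to exclude there genuinely occur, and they violate the stated formula.

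First, you treat ``$R\in\mathcal{I}$, as the definition of composition requires'' as a constraint on $S$. But the theorem quantifies over all $S,U\in\mathcal{I}$ of equal sign, so you may not discard the case where $S$ has a finite outermost coordinate ($a\ge 2$). That case is fatal. Take $\sgn(S)=+$ and let $U=(i)_{i\in[+\infty]}$ be the identity, so $q=b=1$. Then $S\circ U=S$, hence $\ot(R)=\omega^{p}\cdot a$, whereas the statement predicts $\omega^{p}$. Concretely, $E\circ S$ in Example~\ref{Ex:represent}(a) has type $\omega\cdot 2$, not $\omega$.

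Second, suppose $p\ge 2$ and $U$ has a finite coordinate $j_0$. That coordinate sits \emph{after} the infinite coordinate $i_{k-1}$, and ``reindexing it as the outermost coordinate'' is not order-preserving. The block $(i_{k-1},j_0)$ has type $b\cdot\omega=\omega$, so $b$ is absorbed; the correct way to make $R$ uniform is to merge $j_0$ into $i_{k-1}$. This is why the paper finds $\ot(S\circ E)=\omega^{2}$, not $\omega^{2}\cdot 2$, in Example~\ref{Ex:represent}(b). Your sanity check also misreads that example: the paper reports $\ot(E\circ S)=\omega^{2}\cdot 2$, while the formula gives $\omega^{2}$.

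What your Step~2 bookkeeping actually yields is the following; take $\varepsilon=+$, since the case $\varepsilon=-$ is identical with $\omega^{*}$ in place of $\omega$. Let $O=[m_0]\times\cdots\times[m_{k-1}]$ be the outer index block of $S$. It has $p-1$ infinite coordinates, preceded by one finite coordinate of size $a$ when $\delta(S)$ is even, so $\ot(O)=\omega^{p-1}\cdot a$. Since $O$ is lexicographically more significant than the indices of $U$, $R$ consists of $\ot(O)$-many consecutive copies of $U$. Hence $\ot(R)=\ot(U)\cdot\ot(O)=\omega^{q}\cdot b\cdot\omega^{p-1}\cdot a$, which is $\omega^{p-1+q}\cdot a$ for $p\ge 2$ and $\omega^{q}\cdot ab$ for $p=1$. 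This matches all four composites computed in Example~\ref{Ex:represent}. It differs from the stated $\omega^{p-1+q}\cdot b$ whenever $p\ge 2$ and $a\neq b$, or $p=1$ and $a\ge 2$.

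The paper's own proof reaches the printed formula only by writing the product in the reverse order, $\omega^{\varepsilon(p-1)}\cdot a\cdot\omega^{\varepsilon q}\cdot b$, and then absorbing $a$. Its substitution picture (each segment of $S$ replaced by a copy of $U$) in fact gives $\ot(U)\cdot\ot(O)$. So your coordinate-counting approach is the right tool, but what it proves is this corrected formula, not the one in the statement.
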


\begin{proof}
By definition of the composition, each segment of $S$ is replaced in $R=S\circ
U$ by a copy of the entire structure $U$. Each segment of $S$ has order type
$\omega^{\varepsilon}$, so this substitution removes one factor
$\omega^{\varepsilon}$ from the order type of $S$.

Thus, the order type $\omega^{\varepsilon p}\cdot a$ of $S$ is first reduced to
$\omega^{\varepsilon (p-1)}\cdot a$, and each remaining segment is replaced by a copy
of $U$, contributing a factor $\omega^{\varepsilon q}\cdot b$. Hence,

$$\ot(R)=\omega^{\varepsilon (p-1)}\cdot a\cdot \omega^{\varepsilon q}\cdot b.$$

Since ordinal multiplication satisfies $a\omega^{\varepsilon}=\omega^{\varepsilon}$
for any finite $a\in\mathbb{N}$, the finite coefficient $a$ disappears, yielding

$$\ot(R)=\omega^{\varepsilon (p-1+q)}\cdot b.$$
This concludes the proof.
\end{proof}

Note that, by Theorem \ref{order-type}, the constants $p,q,a$ and $b$ depend on the degree
of $S$ and $U$. If the degree of $S$ is even, then $p=\frac{\delta (S)}{%
2}+1$ and $a\in \mathbb{N}$. If the degree of $S$ is odd then $p=\frac{\delta
(S)+1}{2}$ and $a=1$. Similarly, if the degree of $U$ is even then $q=%
\frac{\delta (U)}{2}+1$ and $b\in \mathbb{N}$, while if the degree of $U$ is odd then $%
q=\frac{\delta (U)+1}{2}$ and $b=1$.

Consider now the case in which the composition is restricted to $\mathcal{I}%
_{1}^{\varepsilon }$. In this setting, a particular situation arises. Fix $%
\varepsilon =+$ (the case $\varepsilon =-$ is similar). Consider a sequence
in which only a finite number of natural numbers is reordered, namely
\begin{equation*}
S=\left( s_{0},s_{1},...,s_{n},n+1,n+2,...\right) \in \mathcal{I}_{1}^{+},
\end{equation*}%
so that the tail coincides with the usual ordering of $\mathbb{N}$. In this case, there
exists a finite permutation $\pi $ of $n$ elements associated with $S$, such that

\begin{equation*}
\pi \rightarrow S_{\pi }=\left( \pi (0),\pi (1),...,\pi
(n),n+1,n+2,...\right) .
\end{equation*}%
Naturally, the composition above defined coincides with the composition of
permutations of $n$ elements. Indeed, let 
\begin{equation*}
S_{\pi }=\left( \pi (0),...,\pi (n),n+1,n+2,...\right), 
\end{equation*}
and
\begin{equation*}
U_{\rho}=\left( \rho (0),...,\rho (m),m+1,m+2,...\right).
\end{equation*}

Assuming that $y=\max \left(m,n\right) $ we can write 
\begin{equation*}
S_{\pi }=\left( \pi (0),...,\pi (y),y+1,y+2,...\right),
\end{equation*}
and
\begin{equation*}
U_{\rho}=\left( \rho (0),...,\rho (y),y+1,y+2,...\right),
\end{equation*}

by extending $\pi$ and $\rho$ to permutations of $\{0,1,\ldots,y\}$ and fixing
all remaining values. Then the composition $%
R_{\gamma }=S_{\pi }\circ U_{\rho }$ is given by 
\begin{equation*}
S_{\pi }\circ U_{\rho }=\left( \pi (\rho \left( 0\right) ),\pi (\rho \left(
1\right) ),...,\pi (\rho \left( y\right) ,y+1,y+2,...\right) ,
\end{equation*}%
and naturally $\gamma =\pi \circ \rho $ as permutations of $y=\max \left(
m,n\right) $ elements.

In the general case $\mathcal{I}_{1}^{\varepsilon }$, the composition corresponds to an infinite reordering.  
\begin{equation*}
\pi \rightarrow S_{\pi }=\left( \pi (0),\pi (1),...,\pi (n),...\right) .
\end{equation*}
where $\pi$ is now a bijection of $\mathbb{N}$ preserving the sign
$\varepsilon$ and the order type.

\begin{proposition}\label{P:isogroups}
The composition restricted to $\mathcal{I}_{1}^{\varepsilon }$
produces a group $\left( \mathcal{I}_{1}^{\varepsilon },\circ \right) $. for
each $\varepsilon \in \left\{-,+\right\} $. The groups $\left( \mathcal{I}_{1}^{+},\circ \right) $ and $\left( \mathcal{I}%
_{1}^{-},\circ \right) $ are isomorphic.
\end{proposition}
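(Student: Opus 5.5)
The plan is to identify $\mathcal{I}_{1}^{+}$ with the symmetric group $\mathrm{Sym}(\mathbb{N})$ of all bijections of $\mathbb{N}$, and then transport the group structure. An element $S\in\mathcal{I}_{1}^{+}$ is a single ladder $S=(s(i))_{i\in[+\infty]}$ with $\langle S\rangle=\mathbb{N}$ and $S$ injective. The map $i\mapsto s(i)$ is therefore a bijection $\sigma_S:\mathbb{N}\to\mathbb{N}$. Conversely, every bijection $\pi$ of $\mathbb{N}$ gives $S_\pi=(\pi(0),\pi(1),\dots)\in\mathcal{I}_{1}^{+}$. For $S,U\in\mathcal{I}_{1}^{+}$ we have $k=q=0$ and $\varepsilon=+$, so the defining formula of the composition reads $r(j_0)=s(u(j_0))$. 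Hence $\sigma_{S\circ U}=\sigma_S\circ\sigma_U$.

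The first step is to check closure. Since $\sigma_S\circ\sigma_U$ is a bijection, $S\circ U$ again covers $\mathbb{N}$ injectively. It is indexed by $[+\infty]$, so it lies in $\mathcal{I}_{1}^{+}$; this is also consistent with Theorem~\ref{order} with $p=q=1$.

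The second step is to verify the group axioms. Associativity is inherited from associativity of composition of functions. The identity is $(0,1,2,\dots)$, corresponding to $\mathrm{id}_{\mathbb{N}}$. The inverse of $S$ is $S_{\sigma_S^{-1}}$. Thus $S\mapsto\sigma_S$ is an isomorphism $(\mathcal{I}_{1}^{+},\circ)\cong\mathrm{Sym}(\mathbb{N})$.

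For $\varepsilon=-$ the sequences are indexed by $[-\infty]=-\mathbb{N}$, and the composition reads $r(j)=s(-u(j))$ for $j\in[-\infty]$. The step I expect to need the most care is choosing the right bijection here so that the sign $\varepsilon$ in the definition is absorbed correctly. I would set $\tau_S(n):=s(-n)$ for $n\in\mathbb{N}$, which is a bijection of $\mathbb{N}$. Then
\[
\tau_{S\circ U}(n)=r(-n)=s(-u(-n))=\tau_S(\tau_U(n)),
\]
so $\tau_{S\circ U}=\tau_S\circ\tau_U$. The same argument as in the positive case then shows that $(\mathcal{I}_{1}^{-},\circ)$ is a group isomorphic to $\mathrm{Sym}(\mathbb{N})$; its identity is $(\dots,2,1,0)$.

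Composing the two isomorphisms gives $(\mathcal{I}_{1}^{+},\circ)\cong(\mathcal{I}_{1}^{-},\circ)$. Explicitly, $S\mapsto S^{\ast}$ is the isomorphism, via the involution of Definition~\ref{D:involution}: since $s^{\ast}(j)=s(-j)$, we get $\tau_{S^{\ast}}=\sigma_S$. One then checks directly that $(S\circ U)^{\ast}=S^{\ast}\circ U^{\ast}$, and bijectivity of this map follows from Proposition~\ref{P:involution}.
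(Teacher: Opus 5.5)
Your proof is correct and follows essentially the same route as the paper. Both arguments treat elements of $\mathcal{I}_{1}^{\varepsilon}$ as bijections of $\mathbb{N}$, observe that the composition is function composition so the group axioms are inherited, and use the involution of Definition~\ref{D:involution} for the isomorphism $\mathcal{I}_{1}^{+}\cong\mathcal{I}_{1}^{-}$.

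Your reindexing $\tau_S(n)=s(-n)$ adds real rigor over the paper in two places. It correctly absorbs the sign twist in $r(j)=s(-u(j))$, which the paper's associativity argument and inverse formulas for $\varepsilon=-$ gloss over. Via $\tau_{S^{\ast}}=\sigma_S$, it also actually proves that $S\mapsto S^{\ast}$ is a homomorphism, which the paper only asserts.
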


\begin{proof}
 We verify the group axioms:

\emph{Closure.}  
If $S,U\in\mathcal{I}_{1}^{\varepsilon}$, then both represent infinite
reorderings of $\mathbb{N}$ with the same sign. By construction, the
composition $S\circ U$ is again an injective, surjective reordering of
$\mathbb{N}$ with sign $\varepsilon$. Theorem \ref{order} ensures that the order type of $S$ and $U$ is preserved under composition, hence $S\circ U\in\mathcal{I}_{1}^{\varepsilon}$ 

\emph{Associativity.}  
Associativity follows from associativity of function composition, since
elements of $\mathcal{I}_{1}^{\varepsilon}$ act as bijections of
$\mathbb{N}$.

\emph{Identity.} There is an identity element for the composition for each
case $\varepsilon \in \left\{ -,+\right\} $. For $\varepsilon =+$, is the
sequence natural numbers with the usual order, which is represented by $%
\left( i\right) _{i\in \left[ +\infty \right] }$, since for any $S=\left(
s\left( j\right) \right) _{j\in \left[ +\infty \right] }$ we have 
\begin{equation*}
\left(s\left( j\right) \right) _{j\in \left[ +\infty \right] }\circ \left(
i\right) _{i\in \left[ +\infty \right] }=\left( s\left( i\right) \right)
_{i\in \left[ +\infty \right] }=S
\end{equation*}
and 
\begin{equation*}
\left( i\right) _{i\in \left[
+\infty \right] }\circ \left( s\left( j\right) \right) _{j\in \left[ +\infty %
\right] }=\left( s\left( j\right) \right) _{j\in \left[ +\infty \right] }=S.
\end{equation*}

For $\varepsilon =-$, the identity is the sequence of natural numbers with
the reversed order, which is represented by $\left( -i\right) _{i\in \left[
-\infty \right] }$. In fact, for any $S=\left( s\left( j\right) \right)
_{j\in \left[ -\infty \right] }$ we have 

\[\left( s\left( j\right) \right)
_{j\in \left[ -\infty \right] }\circ \left( -i\right) _{i\in \left[ -\infty %
\right] }=\left( s\left( \varepsilon \left( -i\right) \right) \right) _{i\in %
\left[ -\infty \right] }=\left( s\left( i\right) \right) _{i\in \left[
-\infty \right] }=S\] 
($\varepsilon =-$ and therefore $-\varepsilon i=i$),
and 
\[\left( -i\right) _{i\in \left[ -\infty \right] }\circ \left( s\left(
j\right) \right) _{j\in \left[ -\infty \right] }=\left( -\varepsilon s\left(
j\right) \right) _{j\in \left[ -\infty \right] }=\left( s\left( j\right)
\right) _{j\in \left[ -\infty \right] }=S.\]

\emph{Inverses.} Each $S\in \mathcal{I}_{1}^{\varepsilon }$ corresponds to a
bijection $\pi :\mathbb{N}\rightarrow \mathbb{N}$ preserving the sign $%
\varepsilon $, $S=\left( \pi \left( i\right) \right) _{i\in \left[
\varepsilon \infty \right] }$. Its inverse bijection $\rho :\mathbb{N}%
\rightarrow \mathbb{N}$, $\rho \circ \pi =\pi \circ \rho =id$, defines an
element $T\in \mathcal{I}_{1}^{\varepsilon }$, $T=\left( \rho \left(
j\right) \right) _{j\in \left[ \varepsilon \infty \right] }$, therefore, 

\[S\circ T=\left( \varepsilon \rho \left( \pi \left( i\right) \right) \right)
_{i\in \left[ \varepsilon \infty \right] }=\left( \varepsilon i\right)
_{i\in \left[ \varepsilon \infty \right] }\] 

and 

\[T\circ S=\left( \varepsilon
\pi \left( \rho \left( i\right) \right) \right) _{i\in \left[ \varepsilon
\infty \right] }=\left( \varepsilon i\right) _{i\in \left[ \varepsilon
\infty \right] },\] the identity in each case.

Finally, the involution introduced in Section~4 provides an isomorphism
between $\mathcal{I}_{1}^{+}$ and $\mathcal{I}_{1}^{-}$, hence the two groups
are isomorphic.
\end{proof}

\begin{proposition}
The only invertible elements of $\mathcal{I}$ are those belonging to $\mathcal{I}_{1}^{\varepsilon}$.    
\end{proposition}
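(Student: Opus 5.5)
The plan is to use the order-type bookkeeping of Theorem~\ref{order} together with the identity elements exhibited in the proof of Proposition~\ref{P:isogroups}. First I fix what ``invertible'' means. Composition is only defined between elements of $\mathcal{I}$ of the same sign. So $S\in\mathcal{I}$ with $\sgn(S)=\varepsilon$ is invertible if there is $T\in\mathcal{I}$ of the same sign with $S\circ T=T\circ S=E_\varepsilon$. Here $E_\varepsilon$ is the identity of sign $\varepsilon$: $E_+=(i)_{i\in[+\infty]}$ and $E_-=(-i)_{i\in[-\infty]}$. These are the only two-sided units. Indeed, the computation in the proof of Proposition~\ref{P:isogroups} shows $S\circ E_\varepsilon=S$ for every $S$ of sign $\varepsilon$, so any other unit $E'$ would satisfy $E'=E'\circ E_\varepsilon$. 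This identity is only available for $E'\in\mathcal{I}^\varepsilon_1$, since otherwise $E_\varepsilon\circ E'$ changes degree. The key point is that $\ot(E_\varepsilon)=\omega^{\varepsilon}$, i.e.\ $p=1$, $a=1$ in the notation of Theorem~\ref{order}.

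Next, suppose $S\circ T=E_\varepsilon$ and write $\ot(S)=\omega^{\varepsilon p}\cdot a$ and $\ot(T)=\omega^{\varepsilon q}\cdot b$ with $p,q\ge 1$. These exponents are positive because elements of $\mathcal{I}$ have degree $\ge 1$, and their values are read off from Theorem~\ref{order-type}. Theorem~\ref{order} gives $\ot(S\circ T)=\omega^{\varepsilon(p-1+q)}\cdot b$. Equating this with $\omega^{\varepsilon}$ and using uniqueness of Cantor normal form (and its dual for $\omega^*$-powers) yields $p+q-1=1$ and $b=1$. Since $p,q\ge1$, this forces $p=q=1$. Applying the same argument to $T\circ S=E_\varepsilon$ yields $a=1$. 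By the remark following Theorem~\ref{order}, $p=1$ with $a=1$ means either $\delta(S)=1$, or $\delta(S)=0$ with $a=1$. Degree $0$ is excluded because $\mathcal{V}$ only contains degrees $d\ge1$. Hence $S\in\mathcal{U}_1^\varepsilon\cap\mathcal{I}=\mathcal{I}_1^\varepsilon$. Conversely, Proposition~\ref{P:isogroups} already shows that every element of $\mathcal{I}_1^\varepsilon$ has an inverse in $\mathcal{I}_1^\varepsilon$, which gives the reverse inclusion.

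The main obstacle is the degree-$2$ case, $\ot=a\omega^{\varepsilon}$ in the even-degree convention, which coincides with the $p=1$ situation when $m_0=[1]$. I need to make sure that a degree-$2$ element with a single outer index, which is essentially a ladder, is either identified with its degree-$1$ version or shown not to compose to $E_\varepsilon$ literally. I would try to settle this by comparing degrees directly instead of order types. The composition $S\circ T$ has $k+q+1$ indices, where $S$ has $k+1$ indices and $T$ has $q+1$. Equality with $E_\varepsilon$, which has one index, forces $k=q=0$, so both $S$ and $T$ carry a single index. By Definition~\ref{def-unif}, an element of $\mathcal{V}$ with a single index lies in $\mathcal{U}_1$. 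This index count is cleaner than the order-type argument, so I would present it as the main line and keep Theorem~\ref{order} as a consistency check.
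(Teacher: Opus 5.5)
Your index-count argument is correct, and it takes a genuinely different route from the paper. The paper argues through order types: by Theorem~\ref{order-type}, an element of degree $\ge 2$ is said to carry ``at least two levels of $\omega^{\varepsilon}$-growth''. Composition removes only one of these, so $S\circ U$ cannot have type $\omega^{\varepsilon}$. You instead read off from the definition of $\circ$ that if $S$ has $k+1$ indices and $U$ has $q+1$, then $S\circ U$ has $k+q+1$. Equality with the one-index $E_\varepsilon$ forces $k=q=0$, and a one-index element of $\mathcal{V}$ lies in $\mathcal{U}_1$.

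Your route is the sounder one, and the obstacle you flagged is exactly where the order-type reasoning breaks. An element of $\mathcal{I}_2^{\varepsilon}$ with $a$ outer sequences has type $\omega^{\varepsilon}\cdot a$, which has only one level of growth. For $a=1$ its type is exactly $\omega^{\varepsilon}$, so no order-type comparison can exclude it. Theorem~\ref{order-type} and the remark after Theorem~\ref{order} are off by one in even degree. Theorem~\ref{order} also misplaces the coefficient: for $S\in\mathcal{I}_2^{+}$ with $a\ge 2$, one has $S\circ E_+=S$ of type $\omega\cdot a$, whereas the formula $\omega^{\varepsilon(p-1+q)}\cdot b$ returns coefficient $b=1$.

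Your count also works with the one-sided equation $S\circ U=E_\varepsilon$ that the paper uses. The price is that invertibility becomes a property of the indexed representation rather than of the induced order. The argument relies on elements of $\mathcal{U}$ being equal only as indexed families, which the paper's definitions do support, since the degree is well defined only under that reading. For example, $((s_0,s_1,\dots))\in\mathcal{I}_2^{+}$ and $(s_0,s_1,\dots)\in\mathcal{I}_1^{+}$ induce the same order, yet only the latter is invertible. So drop the order-type paragraph rather than keeping it as a ``consistency check'', since it rests on Theorem~\ref{order}.

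One minor slip: $E_\varepsilon\circ X=X$ for every $X$ of sign $\varepsilon$, whatever its degree, so your stated reason for uniqueness of the unit is wrong. The standard argument $E'=E'\circ E_\varepsilon=E_\varepsilon$ works instead. In any case uniqueness is not needed, because the identity is fixed.
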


\begin{proof}
Let $S\in\mathcal{I}$ be invertible. Then there exists
$U\in\mathcal{I}$ such that $S\circ U=\mathbb{N}$.

By Theorem~\ref{order-type}, if $S$ has degree $\delta(S)\ge2$, then its order
type contains at least two levels of $\omega^{\varepsilon}$-growth. Under
composition, one level is eliminated and replaced by the structure of $U$,
so the resulting order type cannot reduce to $\omega^{\varepsilon}$, the
order type of $\mathbb{N}$. Therefore, invertibility forces $\delta(S)=1$, hence
$S\in\mathcal{I}_{1}^{\varepsilon}$.

Conversely, every element of $\mathcal{I}_{1}^{\varepsilon}$ is invertible, as
shown in Proposition~\ref{P:isogroups}.
\end{proof}

\bibliographystyle{plain}
\bibliography{References}

\end{document}